\newtheorem{theorem}{Theorem}[section]
\newtheorem{lemma}[theorem]{Lemma}
\numberwithin{equation}{section}
\begin{document}

\title[An inverse cavity scattering problem]{Uniqueness of an inverse cavity scattering problem for the biharmonic wave equation}

\author{Heping Dong}
\address{School of Mathematics, Jilin University, Changchun,  Jilin 130012,  China}
\email{dhp@jlu.edu.cn}

\author{Peijun Li}
\address{Department of Mathematics, Purdue University, West Lafayette, Indiana
	47907, USA}
\email{lipeijun@math.purdue.edu}

\thanks{The first author was supported in part by the NSFC grant 12171201. The second author was supported partially by the NSF grant DMS-2208256.}

\subjclass[2010]{35R30, 74J25, 78A46}

\keywords{Biharmonic wave equation, inverse scattering problem, Green's representation theorem, far-field pattern, phaseless data, uniqueness}

\begin{abstract}
This paper addresses an inverse cavity scattering problem associated with the biharmonic wave equation in two dimensions. The objective is to determine the domain or shape of the cavity. The Green's representations are demonstrated for the solution to the boundary value problem, and the one-to-one correspondence is confirmed between the Helmholtz component of biharmonic waves and the resulting far-field patterns. Two mixed reciprocity relations are deduced, linking the scattered field generated by plane waves to the far-field pattern produced by various types of point sources. Furthermore, the symmetry relations are explored for the scattered fields generated by point sources. Finally, we present two uniqueness results for the inverse problem by utilizing both far-field patterns and phaseless near-field data.
\end{abstract}

\maketitle

\section{Introduction}\label{S1}

Scattering problems associated with biharmonic waves hold significant importance in the domain of thin plate elasticity, exhibiting practical applications across diverse fields. These applications range from achievements in experimental ultra-broadband elastic cloaking \cite{FGE-09, SWW-12}, the utilization of platonic crystals for channeling destructive wave energy \cite{EP-07, WUW2004}, to the development of the acoustic black hole technique for vibration control and noise reduction \cite{PGCS-20}. From a mathematical perspective, the motion involving the out-of-plane displacement of thin plates in these structures complies with the governing fourth-order biharmonic wave equation. Despite the well-established scattering theories for acoustic, elastic, and electromagnetic waves, various scattering problems concerning biharmonic waves remain unresolved. This paper specifically addresses the challenges related to uniqueness in the context of the inverse scattering of biharmonic waves by a cavity in an infinite thin plate, thereby contributing to the understanding of this complex phenomenon.

In mathematical contexts, the inverse cavity scattering problem in biharmonic waves shares similarities with inverse obstacle scattering problems formulated in many other wave models. Within these models, various uniqueness results have been established concerning both inverse obstacle scattering problems and their corresponding phaseless variations. In acoustics, \cite{KK1993} expanded upon and simplified Isakov's uniqueness theorem \cite{I1990} by using classical solutions. This was achieved by establishing a contradiction between the pointwise limits of singular solutions. Notably, the applicability of the proof extends to the case involving the Neumann boundary condition. Not only is the shape of the obstacle uniquely determined by the far-field pattern for an infinite number of incident plane waves, but also the boundary condition. For the proofs concerning the uniqueness of the inverse acoustic and electromagnetic obstacle problems, we direct attention to \cite[Theorems 5.6 and 7.1]{DR-shu2}. These results rely on the utilization of mixed reciprocity relations. We refer to \cite{DR-shu2} for a comprehensive overview for inverse acoustic and electromagnetic obstacle problems. In the context of elastic waves, \cite{HH1993} showed that the obstacle's unique determination can be achieved by analyzing the far-field pattern associated with all incident plane waves, combined with a specific range of polarizations at a fixed frequency.

The challenge in the phaseless inverse scattering problem arises from the inherent translation invariance of the phaseless far-field pattern generated by incident plane waves. To handle this obstacle, the introduction of superpositions involving two plane waves as incident fields was proposed in \cite{ZZ2017}, effectively breaking the translation invariance. Further advancements, outlined in \cite{ZG18}, utilized combinations of a plane wave and a point source alongside a reference ball, enabling the unique determination of both the obstacle and the boundary condition from phaseless far-field data. Subsequently, \cite{XZZ2018} extended this approach, employing superpositions of two plane waves and a reference ball, eliminating prior assumptions about the unknown obstacle. This uniqueness result was extended in \cite{XZZ2020_1} to the inverse electromagnetic obstacle scattering problem. Recent developments, as highlighted in \cite{XZZ2020, ZGSL2020}, have established uniqueness results based on phaseless near-field data, overcoming the limitation of translation invariance observed in previous approaches. For the phaseless inverse elastic obstacle scattering, in \cite{JL2019}, the uniqueness result was shown with phaseless far-field pattern and a reference ball. Inspired by the work in \cite{ZGSL2020}, \cite{CD2023} formulated two uniqueness results, one from phaseless far-field data and the other from phaseless near-field data, respectively, without the necessity of a reference ball. For the uniqueness result concerning the phaseless inverse acoustic-elastic interaction problem, we refer to \cite{DLL-IP20}.

This work investigates the inverse scattering of biharmonic waves by a cavity in an infinite thin plate, where the plate wave dynamics satisfy the governing two-dimensional biharmonic wave equation. This model also aligns with the classical Kirchhoff--Love model, particularly in cases focusing solely on bending behavior. In \cite{BR2020}, the uniqueness was established for identifying impenetrable obstacles from multistatic near-field data and an analysis of the linear sampling method was also provided. Unlike the well-studied inverse obstacle scattering problems in acoustic, elastic, and electromagnetic waves, relatively less research has been conducted on the inverse cavity scattering problem for the biharmonic wave equation and its corresponding phaseless variant. A primary distinction lies in the inability to establish a one-to-one correspondence between the scattered field and its far-field pattern, a feature notably present in the acoustic scenario. Moreover, the complexity inherent in the boundary conditions hinder direct application of unique continuation methods.

Drawing from insights presented in \cite{DL2023}, which introduced a boundary integral formulation for the direct cavity scattering problem, we investigate the uniqueness aspect of the inverse cavity scattering problem associated with the biharmonic wave equation. This study incorporates both far-field patterns and phaseless near-field data. By introducing two auxiliary functions, we transform the scattering problem into a coupled boundary value problem, containing the Helmholtz and modified Helmholtz equations. To address the challenges posed by the biharmonic wave equation, following \cite{DR-shu2}, we deduce the Green's representations for both the reduced boundary value problem and the original boundary value problem. Using this approach, we develop two types of mixed reciprocity relations, linking the scattered field generated by plane waves to the far-field pattern produced by various types of point sources. Additionally, we explore symmetry relations associated with the scattered field from different point sources and examine the one-to-one correspondence between the Helmholtz component of the biharmonic waves and the resulting far-field patterns. Subsequently, we establish a uniqueness result derived from the far-field pattern with a fixed wavenumber. Building upon this outcome, we further obtain uniqueness based on phaseless near-field data.

In summary, this study concentrates on the inverse scattering of biharmonic waves by a cavity.  It contains two primary contributions: 

\begin{enumerate}

	\item Derivation of the Green's representations for the biharmonic wave scattering problem, exploration of symmetry relations in the scattered field, and formulation of two distinct types of mixed reciprocity relations.
	
	\item Establishment of two uniqueness results for the inverse problem derived from both far-field patterns and phaseless near-field data at a fixed wavenumber.
	
\end{enumerate}

The paper is structured as follows. Section 2 introduces the biharmonic wave equation and the problem formulation.
In Section 3, we present the Green's representations applied to the associated boundary value problems. Section 4 focuses on establishing uniqueness results through the utilization of mixed reciprocity relations and symmetry relations. Finally, Section 5 provides concluding remarks and outlines avenues for future research.

\section{Problem formulation}\label{S2}

Consider a cavity represented by the domain $D$ with a smooth boundary $\Gamma$ in a two-dimensional infinite elastic thin plate. The cavity receives illumination from a time-harmonic plane wave described by
\[
u^i(x)=\mathrm{e}^{\mathrm{i}\kappa d\cdot x},\quad x\in\mathbb R^2,
\]
where $d=(\cos\theta, \sin\theta)^\top$ is the unit vector in the incident direction, with $\theta\in [0,2\pi)$ denoting the incident angle, and $\kappa>0$ represents the wavenumber. 

The total field, denoted as $u$, satisfies the governing two-dimensional biharmonic wave equation
\begin{align}\label{uwe}
\Delta^2u-\kappa^4 u=0 \quad {\rm in} ~ \mathbb{R}^2\setminus \overline{D}. 
\end{align}
The cavity is assumed to comply with the clamped boundary conditions
\begin{align}\label{bc}
u=0,\quad \partial_{n} u=0 \quad {\rm on} ~ \Gamma,
\end{align}
where $n$ denotes the unit outward normal vector along $\Gamma$. 

The total field $u$ can be written as 
\[
u= u^i+ u^s,
\]
where $u^s$ represents the scattered field. Examining \eqref{uwe} and \eqref{bc}, it can be verified that the scattered field $u^s$ also satisfies the two-dimensional biharmonic wave equation
\begin{align}\label{vwe}
	\Delta^2u^s-\kappa^4 u^s=0 \quad {\rm in} ~ \mathbb{R}^2\setminus \overline{D},
\end{align}
subject to the nonhomogeneous Dirichlet boundary conditions
\begin{align}\label{bcv}
	u^s=-u^i, \quad \partial_n u^s=- \partial_n u^i \quad {\rm on} ~ \Gamma.
\end{align}
Furthermore, the scattered field $u^s$ satisfies the Sommerfeld radiation conditions
\begin{align}\label{sc}
	\lim_{r\to\infty}r^{\frac{1}{2}}(\partial_r
u^s-\mathrm{i}\kappa u^s)=0, \quad
	\lim_{r\to\infty}r^{\frac{1}{2}}(\partial_r
	\Delta u^s-\mathrm{i}\kappa \Delta u^s)=0, \quad r=|x|.
\end{align}

Following \cite{DL2023}, we consider two auxiliary functions
\begin{align}\label{vhvm}
	u^s_{\rm H} = -\frac{1}{2\kappa^2}(\Delta u^s - \kappa^2 u^s), \quad u^s_{\rm M} = \frac{1}{2\kappa^2}(\Delta u^s + \kappa^2 u^s).
\end{align}
It is clear to note that 
\begin{align}\label{decomposition}
	u^s=u^s_{\rm H}+u^s_{\rm M}, \quad \Delta u^s=\kappa^2(u^s_{\rm M}-u^s_{\rm H}).
\end{align}
By \eqref{vwe}, we have
\begin{align*}
	(\Delta - \kappa^2)(\Delta + \kappa^2)u^s=0 \quad {\rm in} ~ \mathbb{R}^2\setminus \overline{D},
\end{align*}
which is fulfilled if $u^s_{\rm H}$ satisfies the Helmholtz equation and $u^s_{\rm M}$ complies with the modified Helmholtz equation, respectively, i.e., 
\begin{align} \label{vHvM}
	\Delta u^s_{\rm H} + \kappa^2 u^s_{\rm H} = 0, \quad \Delta u^s_{\rm M} - \kappa^2 u^s_{\rm M} = 0\quad \text{in}~\mathbb{R}^2\setminus \overline{D}.
\end{align}
Moreover, $u^s_{\rm H}$ and $u^s_{\rm M}$ satisfy the coupled boundary conditions on $\Gamma$: 
\begin{align}\label{bccouple}
	u^s_{\rm H}+u^s_{\rm M}= f_1,\quad \partial_n u^s_{\rm H} + \partial_n u^s_{\rm M}= f_2, 
\end{align}
where $f_1=-u^i$, $f_2=-\partial_n u^i$. It follows from \eqref{sc} and \eqref{vhvm} that $u^s_{\rm H}$ and $u^s_{\rm M}$ satisfy the Sommerfeld radiation condition 
\begin{align}\label{sc1}
	\lim_{r\to\infty}r^{\frac{1}{2}}(\partial_r
	u^s_{\rm H}-\mathrm{i}\kappa u^s_{\rm H})=0, \quad \lim_{r\to\infty}r^{\frac{1}{2}}(\partial_r
	u^s_{\rm M}-\mathrm{i}\kappa u^s_{\rm M})=0, \quad r=|x|.
\end{align}

By employing \eqref{vhvm} and \eqref{decomposition}, it becomes evident that the scattering problem described in \eqref{vwe}--\eqref{sc} is equivalent to the scattering problem defined in \eqref{vHvM}--\eqref{sc1}.

It is known that a radiating solution of \eqref{vwe} exhibits the following asymptotic expansion:
\begin{equation}\label{farf}
	u^s(x)=\frac{\mathrm{e}^{\mathrm{i}\kappa|x|}}{|x|^{\frac{1}{2}}}u^\infty(\hat{x})+\mathcal{O}\left(\frac{1}{|x|^{\frac{3}{2}}}\right),
	\quad |x|\to\infty
\end{equation}
consistently observed across all directions $\hat{x}:=x/|x|$. Here, $u^\infty$ denotes a function defined over the unit circle $\Omega$ and characterizes the far-field pattern exhibited by $u^s$. The inverse problem involves determining the cavity $D$ based on either the wave field $u^s$ or its corresponding far-field pattern $u^\infty$.

\section{Green's representation formulas}\label{S3}

This section concerns several Green's representation formulas related to the solution of the biharmonic wave equation.

Let $G_{\rm H}$ and $G_{\rm M}$ be the fundamental solutions corresponding to the Helmholtz equation and the modified Helmholtz equation in two dimensions, respectively. Explicitly, we have 
\begin{align}\label{GHGM}
	G_{\rm H}(x,y)=\frac{\mathrm{i}}{4}H_0^{(1)}(\kappa|x-y|),\quad G_{\rm M}(x,y)=\frac{\mathrm{i}}{4}H_0^{(1)}(\mathrm{i}\kappa|x-y|),\quad x\neq y, 
\end{align}
where $H_0^{(1)}$ denotes the Hankel function of the first kind with order zero. The Green's function of the biharmonic wave equation \eqref{uwe} can be verified to have a representation provided by
\begin{align}\label{funsol}
G(x,y)=\frac{1}{2\kappa^2}\big(G_{\rm M}(x,y)-G_{\rm H}(x,y)\big), \quad x\neq y.
\end{align}

According to Green's second theorem, for $u\in\mathcal{C}^4(\overline{D})$ and $v\in\mathcal{C}^2(\overline{D})$, we have
\begin{equation} \label{Green1}
	\int_{D} \big\{(\Delta^2 u) v - \Delta u\Delta v\big\}\mathrm{d}x = \int_{\Gamma} \bigg\{v\frac{\partial \Delta u}{\partial n}-\Delta u\frac{\partial v}{\partial{n}}\bigg\} \mathrm{d}s.
\end{equation}
Interchanging the roles of $u$ and $v$ and subtracting the resulting equation from \eqref{Green1} leads to the following identity for $u, v\in\mathcal{C}^4(\overline{D})$:
\begin{equation} \label{Green2}
	\int_{D} \big\{(\Delta^2 u) v - (\Delta^2 v) u\big\}\mathrm{d}x = \int_{\Gamma} \bigg\{v\frac{\partial \Delta u}{\partial{n}}-\Delta u\frac{\partial v}{\partial{n}}+\Delta v\frac{\partial u}{\partial{n}}-u\frac{\partial \Delta v}{\partial{n}}\bigg\} \mathrm{d}s.
\end{equation}

Let $Pv:=\Delta v$, $Qv:=-\partial_{n}\Delta v$, and define
	\begin{align*}
	W(v,\partial_{n}v)&=\int_{\Gamma}\biggl\{\big(P_yG(x,y)\big)\frac{\partial v}{\partial n}(y)+\big(Q_yG(x,y)\big)v(y)\biggr\}\mathrm{d}s(y),\\
	U(Pv,Qv)&=\int_{\Gamma}\biggl\{G(x,y)(Qv)(y)+\frac{\partial G(x,y)}
	{\partial n(y)}(Pv)(y)\biggr\}\mathrm{d}s(y).
	\end{align*}

The following result relates $u^s$ in the domain $D$ to its boundary values, where $W(u^s,\partial_{n}u^s)$ and $U(Pu^s,Qu^s)$ capture specific integrals over the boundary $\Gamma$ involving certain derivatives of $u^s$.    
    
\begin{theorem} \label{Greenth1}
Consider $u^s$ as a solution to the biharmonic wave equation \eqref{vwe} in the domain $D$ with a boundary $\Gamma\in \mathcal{C}^2$. Let $u^s\in \mathcal{C}^4(D)\cap \mathcal{C}(\overline{D})$, $\Delta u^s\in \mathcal{C}(\overline{D})$, and both $u^s$ and $\Delta u^s$ have normal derivatives on the boundary such that the limits
\begin{align*}
\frac{\partial u^s}{\partial n}(x)&=\lim_{h\to +0}n(x)\cdot \nabla u^s(x-h n(x)), \quad x\in\Gamma, \\
\frac{\partial \Delta u^s}{\partial n}(x)&=\lim_{h\to +0}n(x)\cdot \nabla \Delta u^s(x-hn(x)), \quad x\in\Gamma
\end{align*}
uniformly exist on $\Gamma$. Consequently,
\begin{align}\label{Bihar1}
u^s(x)=W(u^s,\partial_{n}u^s)-U(Pu^s,Qu^s), \quad x\in D.
\end{align}
\end{theorem}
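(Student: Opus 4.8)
The plan is to apply the biharmonic Green's identity \eqref{Green2} with $u=u^s$ and $v(\cdot)=G(x,\cdot)$ at a fixed interior point $x\in D$. Two adjustments are forced on us: the singularity of $G$ at $y=x$ must be excised by removing a small disk $B(x,\epsilon)$, and, since $u^s$ is only assumed $\mathcal{C}^4$ on the open set $D$ and merely continuous up to $\Gamma$ (together with $\Delta u^s$ and the two one-sided normal derivatives), we cannot apply \eqref{Green2} on $D$ directly but must first retreat to an interior parallel subdomain. Accordingly, for small $h>0$ I would introduce the interior parallel curve $\Gamma_h=\{z-h\,n(z):z\in\Gamma\}$ and the subdomain $D_h\subset D$ it bounds -- well defined and of class $\mathcal{C}^1$ since $\Gamma\in\mathcal{C}^2$ -- and work on $D_{h,\epsilon}:=D_h\setminus\overline{B(x,\epsilon)}$ for $h,\epsilon$ small enough that $\overline{B(x,\epsilon)}\subset D_h$.

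On a neighbourhood of $\overline{D_{h,\epsilon}}$ the function $u^s$ is $\mathcal{C}^4$ (it is $\mathcal{C}^4$ near the compact set $\overline{D_h}\subset D$) and $y\mapsto G(x,y)$ is smooth, so \eqref{Green2} applies. Its left-hand side vanishes: by \eqref{vwe} one has $\Delta^2u^s=\kappa^4u^s$, while \eqref{funsol} together with the Helmholtz and modified Helmholtz equations satisfied by $G_{\rm H}$ and $G_{\rm M}$ off the diagonal gives $\Delta^2_yG(x,y)=\kappa^4G(x,y)$ for $y\ne x$, so the integrand $(\Delta^2u^s)G-(\Delta^2G)u^s$ is identically zero on $D_{h,\epsilon}$. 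Hence the boundary integral over $\partial D_{h,\epsilon}=\Gamma_h\cup\partial B(x,\epsilon)$ vanishes. On $\Gamma_h$, a direct rewriting using $Pv=\Delta v$, $Qv=-\partial_n\Delta v$ and the kernel identities $P_yG=\Delta_yG$, $Q_yG=-\partial_{n(y)}\Delta_yG$ shows that the integrand of \eqref{Green2} equals $(P_yG)\partial_nu^s+(Q_yG)u^s-G\,Qu^s-(\partial_nG)\,Pu^s$, so the $\Gamma_h$-contribution is exactly $W_h(u^s,\partial_nu^s)-U_h(Pu^s,Qu^s)$, where $W_h$ and $U_h$ denote the functionals $W$ and $U$ with the integrals taken over $\Gamma_h$.

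Next I would let $\epsilon\to0$. The key input is the local structure of $G$ near $x$: since the logarithmic singular parts of $G_{\rm H}$ and $G_{\rm M}$ coincide, they cancel in $G=\tfrac1{2\kappa^2}(G_{\rm M}-G_{\rm H})$, so $G(x,\cdot)$ is bounded near $x$ with $\nabla_yG(x,\cdot)=O\!\big(r\ln\tfrac1r\big)$ as $r=|x-y|\to0$, whereas $\Delta_yG=\tfrac12(G_{\rm M}+G_{\rm H})$ retains the $-\tfrac1{2\pi}\ln r$ singularity and $|\partial_n\Delta_yG|\sim\tfrac1{2\pi r}$. Since $u^s,\partial_nu^s,\Delta u^s,\partial_n\Delta u^s$ are bounded near the interior point $x$, the contributions of the first three terms of the $\partial B(x,\epsilon)$-integrand of \eqref{Green2} are $O\!\big(\epsilon\ln\tfrac1\epsilon\big)$ and drop out, while the last term $-u^s\,\partial_n\Delta_yG$ integrates to $-u^s(x)+o(1)$ by continuity of $u^s$ at $x$ (on $\partial B(x,\epsilon)$ the outward normal of $D_{h,\epsilon}$ points toward $x$). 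Thus $u^s(x)=W_h(u^s,\partial_nu^s)-U_h(Pu^s,Qu^s)$ for all sufficiently small $h$.

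Finally I would pass to the limit $h\to0$. Parametrizing $\Gamma_h$ by $z\mapsto z-h\,n(z)$, $z\in\Gamma$, the Jacobian of this map tends to $1$ uniformly (using $n\in\mathcal{C}^1$, hence $\Gamma\in\mathcal{C}^2$). Because $x$ is a fixed point of $D$, for $h<\tfrac12\,\mathrm{dist}(x,\Gamma)$ the curves $\Gamma_h$ stay at a fixed positive distance from $x$, so the kernels $G(x,\cdot)$, $\partial_nG(x,\cdot)$, $\Delta_yG(x,\cdot)$ and $\partial_n\Delta_yG(x,\cdot)$, pulled back to $\Gamma$ along this parametrization, converge uniformly as $h\to0$; and the hypotheses are exactly what guarantees that $u^s$, $\Delta u^s$, and the one-sided normal derivatives $\partial_nu^s$, $\partial_n\Delta u^s$, pulled back in the same way, converge uniformly on $\Gamma$ to their boundary traces. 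Hence $W_h(u^s,\partial_nu^s)\to W(u^s,\partial_nu^s)$ and $U_h(Pu^s,Qu^s)\to U(Pu^s,Qu^s)$, yielding \eqref{Bihar1}. I expect this last limit to be the main obstacle: the vanishing of the volume term and the small-disk computation are routine, but transferring the identity to $\Gamma$ genuinely uses the assumed \emph{uniform} existence of the one-sided normal derivatives of $u^s$ and $\Delta u^s$ and the $\mathcal{C}^2$-smoothness of $\Gamma$ to control the integrals over the shrinking family of parallel curves -- precisely the mechanism used, in the second-order setting, for the Helmholtz equation in \cite{DR-shu2}.
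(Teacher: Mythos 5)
Your proposal is correct and follows essentially the same route as the paper: apply the biharmonic Green's identity \eqref{Green2} with $v=G(x,\cdot)$ on the domain with a small disk around $x$ excised, use the cancellation of the logarithmic singularities of $G_{\rm H}$ and $G_{\rm M}$ so that only the $u^s\,\partial_n\Delta_y G$ term survives the limit and produces $-u^s(x)$, and then pass from parallel curves $\Gamma_h$ to $\Gamma$ using the assumed uniform existence of the one-sided normal derivatives. The only differences are presentational: the paper computes the kernels on $\partial B(x;\rho)$ explicitly via Hankel-function asymptotics and treats the $\mathcal{C}^4(\overline{D})$ case first, sketching the parallel-surface limit in one sentence, whereas you argue the small-disk limit structurally and spell out the $h\to 0$ step in detail.
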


\begin{proof}
First, we assume that $u^s\in \mathcal{C}^4(\overline{D})$. For any $x\in D$, let $\partial{B(x;\rho)}:=\{y\in \mathbb{R}^2: |x-y|=\rho\}$ denote a circle centered at $x$ with a radius $\rho$, where the unit normal $n$ is oriented towards the interior of $\partial{B(x;\rho)}$. Utilizing \eqref{Green2} for the functions $u^s$ and $G(x,\cdot)$ in the domain $D_\rho:=\{y\in D: |x-y|>\rho\}$, we derive from the Green's second theorem that 
\begin{align*}
	0&=\int_{D_\rho} \big\{G(x,y)\Delta^2 u^s(y) - u^s(y)\Delta_y^2 G(x,y) \big\}\mathrm{d}x \\
	&=\int_{\Gamma\cup \partial{B(x;\rho)}}\bigg\{G(x,y)\frac{\partial \Delta u^s}{\partial n}(y)-\Delta u^s\frac{\partial G(x,y)}{\partial n(y)}\\
	&\quad +\Delta_y G(x,y)\frac{\partial u^s}{\partial n}(y) - u^s(y)\frac{\partial\Delta G(x,y)}{\partial n(y)}\bigg\}\mathrm{d}s(y),
\end{align*}
which can be equivalently written as
\begin{align*}
	&U(Pu^s,Qu^s)-W(u^s,\partial_{n}u^s)\\
	&=\int_{\partial{B(x;\rho)}}\biggl\{G(x,y)(Qu^s)(y)+\frac{\partial G(x,y)}
	{\partial n(y)}(Pu^s)(y) \\
	&\quad - \big(P_yG(x,y)\big)\frac{\partial u^s}{\partial n}(y)-\big(Q_yG(x,y)\big)u^s(y)\biggr\}\mathrm{d}s(y).
\end{align*}

On $\partial{B(x;\rho)}$, the following equations hold:
\begin{align*}
	\nabla_yG(x,y)&=\frac{\mathrm{i}}{8\kappa}\frac{y-x}{\rho}\big[H_1^{(1)}(\kappa\rho)-\mathrm{i}H_1^{(1)}(\mathrm{i}\kappa\rho)\big],\\
	P_yG(x,y)&=\frac{\mathrm{i}}{\kappa}\big[H_0^{(1)}(\kappa\rho)+H_0^{(1)}(\mathrm{i}\kappa\rho)\big],\\
	Q_yG(x,y)&=\frac{\mathrm{i}\kappa}{8}n(y)\cdot\frac{y-x}{\rho}\big[H_1^{(1)}(\kappa\rho)+\mathrm{i}H_1^{(1)}(\mathrm{i}\kappa\rho)\big].
\end{align*}
Performing a direct computation using the limiting forms of Bessel functions as described in \cite[\S 10.7]{NIST2010} and the power series \cite[$(10.8.2)$]{NIST2010} demonstrates that
\begin{align*}
&\lim_{\rho\to 0}\int_{\partial{B(x;\rho)}}\biggl\{G(x,y)(Qu^s)(y)+\frac{\partial G(x,y)}
{\partial n(y)}(Pu^s)(y) \\
&\quad - \big(P_yG(x,y)\big)\frac{\partial u^s}{\partial n}(y)-\big(Q_yG(x,y)\big)u^s(y)\biggr\}\mathrm{d}s(y)\\
&= 0+0-0-u^s(x),
\end{align*}
which implies that \eqref{Bihar1} holds.

When considering $u^s\in\mathcal{C}^4(D)\cap \mathcal{C}(\overline{D})$ and $\Delta u^s\in\mathcal{C}(\overline{D})$, along with the uniform convergence of the normal derivatives of $u^s$ and $\Delta u^s$, we can initially employ integrals over parallel surfaces to the boundary $\Gamma$ and subsequently approach the limit toward $\Gamma$.
\end{proof}

The following result establishes a relationship between the solution $u^s$ of the exterior problem for the biharmonic wave equation and specific boundary integral terms, where $U(Pu^s,Qu^s)$ and $W(u^s,\partial_{n}u^s)$ denote particular integral expressions over $\Gamma$ involving derivatives of $u^s$.

\begin{theorem} \label{Greenth2}
Consider $u^s$ as a radiating solution to the biharmonic wave equation \eqref{vwe} with boundary $\Gamma\in \mathcal{C}^2$. Let $u^s\in \mathcal{C}^4(\mathbb{R}^2\setminus \overline{D})\cap \mathcal{C}(\mathbb{R}^2\setminus {D})$, $\Delta u^s\in \mathcal{C}(\mathbb{R}^2\setminus {D})$, and both $u^s$ and $\Delta u^s$ have normal derivatives on the boundary such that the limits
\begin{align*}
\frac{\partial u^s}{\partial n}(x)&=\lim_{h\to +0}n(x)\cdot \nabla u^s(x+hn(x)), \quad x\in\Gamma, \\
\frac{\partial \Delta u^s}{\partial n}(x)&=\lim_{h\to +0}n(x)\cdot \nabla \Delta u^s(x+hn(x)), \quad x\in\Gamma
\end{align*}
uniformly exist on $\Gamma$. Consequently,
\begin{align}\label{Bihar2}
u^s(x)=U(Pu^s,Qu^s)-W(u^s,\partial_{n}u^s), \quad x\in \mathbb{R}^2\setminus \overline{D}.
\end{align}
\end{theorem}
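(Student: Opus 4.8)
The plan is to run the argument of Theorem~\ref{Greenth1} with the opposite orientation on a bounded region that exhausts the exterior of $D$, and to discard the contribution of the artificial outer circle by means of the radiation conditions \eqref{sc}. First I would assume $u^s\in\mathcal{C}^4(\mathbb{R}^2\setminus D)$, recovering the stated regularity at the end by the parallel-surface device used in the proof of Theorem~\ref{Greenth1}. Fixing $x\in\mathbb{R}^2\setminus\overline{D}$, I would choose $R>0$ so large that $\overline{D}\cup\{x\}\subset B_R:=\{y:|y|<R\}$ and, for small $\rho>0$, apply Green's second identity \eqref{Green2} to the pair $u^s,\,G(x,\cdot)$ on $\Omega_{R,\rho}:=(B_R\setminus\overline{D})\setminus\overline{B(x;\rho)}$, whose boundary is $\Gamma\cup\partial B_R\cup\partial B(x;\rho)$. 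Since $\Delta^2u^s=\kappa^4u^s$ there and $\Delta_y^2G(x,y)=\kappa^4G(x,y)$ for $y\neq x$, the volume term drops out, leaving
\begin{align*}
0=\left(\int_{\Gamma}+\int_{\partial B_R}+\int_{\partial B(x;\rho)}\right)\!\left\{G\,\frac{\partial\Delta u^s}{\partial n}-\Delta u^s\,\frac{\partial G}{\partial n}+\Delta_yG\,\frac{\partial u^s}{\partial n}-u^s\,\frac{\partial\Delta_yG}{\partial n}\right\}\!\mathrm{d}s,
\end{align*}
with $n$ the outward unit normal of $\Omega_{R,\rho}$ on each piece.

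On $\Gamma$ this outward normal points into $D$, i.e.\ it is the negative of the normal appearing in the definitions of $W$ and $U$; rewriting the integrand through $Pv=\Delta v$, $Qv=-\partial_n\Delta v$ and absorbing the resulting overall sign change, the $\Gamma$-contribution becomes $U(Pu^s,Qu^s)-W(u^s,\partial_nu^s)$, and this reversal of orientation relative to Theorem~\ref{Greenth1} is exactly what swaps the roles of $W$ and $U$ in \eqref{Bihar2}. On $\partial B(x;\rho)$ the outward normal of $\Omega_{R,\rho}$ is directed toward $x$, precisely as in the proof of Theorem~\ref{Greenth1}, so the same computation applies: inserting the explicit forms of $\nabla_yG$, $P_yG$, $Q_yG$ and using the small-argument expansions of the Hankel functions \cite[\S 10.7 and $(10.8.2)$]{NIST2010}—with the logarithmic singularities of $G_{\rm H}$ and $G_{\rm M}$, and of their gradients, cancelling in the combination \eqref{funsol} so that $G(x,\cdot)$ and $\nabla_yG(x,\cdot)$ stay bounded near $x$, while the leading part of $Q_yG(x,\cdot)$ supplies the mean-value factor—gives that the $\partial B(x;\rho)$-contribution tends to $-u^s(x)$ as $\rho\to0$.

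The remaining and main task is to show that the $\partial B_R$-contribution tends to $0$ as $R\to\infty$. Here I would use the decomposition \eqref{decomposition}, $u^s=u^s_{\rm H}+u^s_{\rm M}$, together with $G(x,\cdot)=\tfrac{1}{2\kappa^2}(G_{\rm M}(x,\cdot)-G_{\rm H}(x,\cdot))$ from \eqref{funsol}. Since $u^s_{\rm M}$, $G_{\rm M}(x,\cdot)$ and all their derivatives decay exponentially as $|y|\to\infty$, every product in the $\partial B_R$-integrand containing at least one modified-Helmholtz factor is exponentially small and stays negligible after integration over $\partial B_R$ (of length $2\pi R$). The terms that survive involve only $u^s_{\rm H}$ and $G_{\rm H}(x,\cdot)$; using $\Delta u^s_{\rm H}=-\kappa^2u^s_{\rm H}$ and $\Delta_yG_{\rm H}=-\kappa^2G_{\rm H}$ from \eqref{vHvM}, they collapse, up to constant multiples, to $\int_{\partial B_R}\bigl(G_{\rm H}\,\partial_ru^s_{\rm H}-u^s_{\rm H}\,\partial_rG_{\rm H}\bigr)\mathrm{d}s$, which tends to $0$ by the classical Sommerfeld-radiation argument for the Helmholtz equation, both $u^s_{\rm H}$ and $G_{\rm H}(x,\cdot)$ being radiating solutions satisfying \eqref{sc1} (cf.\ \cite{DR-shu2}). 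Letting $\rho\to0$ and then $R\to\infty$ in the displayed identity then gives $0=\bigl(U(Pu^s,Qu^s)-W(u^s,\partial_nu^s)\bigr)-u^s(x)$, which is \eqref{Bihar2}. For the general regularity assumed in the statement I would, as at the end of the proof of Theorem~\ref{Greenth1}, first carry out the argument with $\Gamma$ replaced by interior parallel surfaces on which $u^s$ is smooth and then pass to the limit onto $\Gamma$. I expect the decay of the $\partial B_R$-integral to be the main obstacle, since it is the one step where the internal cancellations of the biharmonic fundamental solution and the two separate scalar radiation conditions must be combined.
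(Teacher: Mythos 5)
Your proposal is correct and follows essentially the same route as the paper: Green's second identity on the truncated exterior region, the small-circle limit producing $-u^s(x)$, the orientation flip on $\Gamma$ swapping $W$ and $U$, and the radiation conditions eliminating the contribution of the large circle. The only difference is in bookkeeping for the outer integral: the paper applies Theorem \ref{Greenth1} on the annulus and groups that integral into four radiation brackets $I_1,\dots,I_4$ involving $u^s$, $\Delta u^s$ and the full Green function $G$, estimated by Cauchy--Schwarz together with the bounds $\int_{\partial B(O;r)}|u^s|^2\,\mathrm{d}s=\mathcal{O}(1)$ and $\int_{\partial B(O;r)}|\Delta u^s|^2\,\mathrm{d}s=\mathcal{O}(1)$, whereas you split the integrand into its Helmholtz and modified Helmholtz components (the mixed products being exponentially negligible) and reduce to the classical Helmholtz statement $\int_{\partial B_R}\bigl(G_{\rm H}\,\partial_r u^s_{\rm H}-u^s_{\rm H}\,\partial_r G_{\rm H}\bigr)\mathrm{d}s\to 0$; both treatments are valid and rest on the same ingredients.
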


\begin{proof} 
Let us assume, without loss of generality, that the original point $O\in D$, and consider a sufficiently large radius $r$ such that $D$ is enclosed within $B(O;r)$ with boundary $\partial B(O;r)$. Applying Theorem \ref{Greenth1} in $D_r:=\{y\in\mathbb{R}^2\setminus \overline{D}: |y|<r\}$ yields 
\begin{align}\label{vdecom}
	u^s(x) = I_{\partial B(O;r)}-I_{\Gamma}, \quad x\in D_r,
\end{align}
where
\begin{align*}
I_{\sigma}&=\int_{\sigma}\biggl\{\big(P_yG(x,y)\big)\frac{\partial u^s}{\partial n}(y)+\big(Q_yG(x,y)\big)u^s(y)\\
&\quad - G(x,y)(Qu^s)(y)-\frac{\partial G(x,y)} {\partial n(y)}(Pu^s)(y)\biggr\}\mathrm{d}s(y)
\end{align*}
for $\sigma=\Gamma$ or $\partial B(O;r)$. 

We proceed by decomposing $I_{\partial B(O;r)}$ as follows:
\begin{align*}
I_{\partial B(O;r)}=(I_3-I_4)-(I_1-I_2),
\end{align*}
where
\begin{align*}
	I_1&:=\int_{\partial B(O;r)}\Delta u^s(y)\biggl\{\frac{\partial G(x,y)}{\partial n(y)}-\mathrm{i}\kappa G(x,y)\biggr\}\mathrm{d}s(y),\\
	I_2&:=\int_{\partial B(O;r)}G(x,y)\biggl\{\frac{\partial \Delta u^s}{\partial n}(y)-\mathrm{i}\kappa\Delta u^s(y)\biggr\}\mathrm{d}s(y),\\
	I_3&:=\int_{\partial B(O;r)}\Delta_y G(x,y)\biggl\{\frac{\partial u^s}{\partial n}(y)-\mathrm{i}\kappa u^s(y)\biggr\}\mathrm{d}s(y),\\
	I_4&:=\int_{\partial B(O;r)}u^s(y)\biggl\{\frac{\partial \Delta G(x,y)}{\partial n(y)}-\mathrm{i}\kappa\Delta_y G(x,y)\biggr\}\mathrm{d}s(y).	
\end{align*}
Noting \eqref{decomposition} and \cite[$(2.10)$]{DR-shu2}, we may show that 
\begin{align*}%\label{intus}
	\int_{\partial B(O;r)} |u^s|^2 \mathrm{d}s=\mathcal{O}(1),
	\quad \int_{\partial B(O;r)} |\Delta u^s|^2 \mathrm{d}s=\mathcal{O}(1), \quad r\to \infty.
\end{align*}
In view of the radiation condition \eqref{sc} and the Cauchy--Schwarz inequality, we obtain 
\begin{equation*}
	I_j\to0, \quad r\to\infty, \quad j=1, 2, 3, 4.
\end{equation*}
The proof is completed by taking the limit as $r\to\infty$ in \eqref{vdecom} and recognizing that $I_{\Gamma}=W(u^s,\partial_{n}u^s)-U(Pu^s,Qu^s)$.
\end{proof}	

The equations \eqref{Bihar1} and \eqref{Bihar2} represent the Green's representations for the solution of the biharmonic wave equation. Using the definition of the far-field pattern in \eqref{farf}, the fundamental solutions in \eqref{GHGM}--\eqref{funsol}, and considering the exponential decay of $G_{\rm M}$, we derive that the far-field pattern of the scattered field for the biharmonic wave equation is given by 
\begin{align}\label{Bi_farfield}
u^\infty(\hat{x})&=\frac{1}{2}\frac{e^{\mathrm{i}\pi/4}}{\sqrt{8\kappa\pi}}\int_{\Gamma}\bigg\{u^s(y)\frac{\partial \mathrm{e}^{-\mathrm{i}\kappa\hat{x}\cdot y}}{\partial n(y)}-\mathrm{e}^{-\mathrm{i}\kappa\hat{x}\cdot y}\frac{\partial u^s}{\partial n}(y)\bigg\}\mathrm{d}s(y)\nonumber\\
&\quad -\frac{1}{2\kappa^2}\frac{e^{\mathrm{i}\pi/4}}{\sqrt{8\kappa\pi}}\int_{\Gamma}\bigg\{\Delta u^s(y)\frac{\partial \mathrm{e}^{-\mathrm{i}\kappa\hat{x}\cdot y}}{\partial n(y)}-\mathrm{e}^{-\mathrm{i}\kappa\hat{x}\cdot y}\frac{\partial \Delta u^s}{\partial n}(y)\bigg\}\mathrm{d}s(y), \quad \hat{x}\in\Omega.
\end{align}

Let $u^s_{\rm H}$ and $u^s_{\rm M}$ denote the radiating solutions to the problem \eqref{vHvM}--\eqref{sc1}. Similar to the proof presented in \cite[Theorem 2.5]{DR-shu2}, we can demonstrate that for $x\in \mathbb{R}^2\setminus \overline{D}$, the following integral equations hold:
\begin{align}
	\label{v_H}
		u^s_{\rm H}(x)&=\int_{\Gamma}\bigg\{u^s_{\rm H}(y)\frac{\partial G_{\rm H}(x,y)}{\partial n(y)}-G_{\rm H}(x,y)\frac{\partial u^s_{\rm H}}{\partial n}(y)\bigg\}\mathrm{d}s(y),\\
		 \label{v_M}
		u^s_{\rm M}(x)&=\int_{\Gamma}\bigg\{u^s_{\rm M}(y)\frac{\partial G_{\rm M}(x,y)}{\partial n(y)}-G_{\rm M}(x,y)\frac{\partial u^s_{\rm M}}{\partial n}(y)\bigg\}\mathrm{d}s(y).
	\end{align}
Using the asymptotic behavior of the fundamental solution $G_{\rm H}$ to the Helmholtz equation, we derive 
	\begin{align}\label{far}
		u_{\rm H}^\infty(\hat{x})=\frac{e^{\mathrm{i}\pi/4}}{\sqrt{8\kappa\pi}}\int_{\Gamma}\bigg\{u^s_{\rm H}(y)\frac{\partial \mathrm{e}^{-\mathrm{i}
				\kappa\hat{x}\cdot y}}{\partial n(y)}-\mathrm{e}^{-\mathrm{i}
			\kappa\hat{x}\cdot y}\frac{\partial u^s_{\rm H}}{\partial n}(y)\bigg\}\mathrm{d}s(y), \quad \hat{x}\in\Omega.
	\end{align}
	
Given the decomposition \eqref{decomposition} and the fundamental solutions \eqref{GHGM}--\eqref{funsol}, the combination of $u^s_{\rm H}$ and $u^s_{\rm M}$, as expressed by \eqref{v_H}--\eqref{v_M}, equals $U(Pu^s,Qu^s)-W(u^s,\partial_n u^s)$, which can be represented as $u^s_{\rm H}+u^s_{\rm M}=U(Pu^s,Qu^s)-W(u^s,\partial_n u^s)$. Additionally, it is evident that $u^\infty(\hat{x})=u_{\rm H}^\infty(\hat{x})$.

Furthermore, as mentioned in \cite[Remark 2.1]{DL2023}, it has been established that $u^s_{\rm M}$ and $\partial_r u^s_{\rm M}$ exhibit exponential decay as $|x|\to\infty$ for the fixed wavenumber $\kappa$ or as $\kappa|x|\to\infty$. Specifically, $u^s_{\rm M}$ decays according to the expression
\begin{align*}%\label{decay}
u^s_{\rm M}(x)=\mathcal{O}\bigg(\frac{\mathrm{e}^{-\kappa |x|}}{|x|^{\frac{1}{2}}}\bigg), \quad |x|\to\infty.
\end{align*}

\section{Uniqueness}\label{S4}

This section is dedicated to establishing the uniqueness of the inverse cavity scattering problem derived from both far-field patterns and phaseless near-field data. The subsequent two lemmas address the one-to-one correspondence between the Helmholtz component of biharmonic waves and their far-field patterns.

\begin{lemma}\label{Rellich}
Consider $u^s\in \mathcal{C}^4(\mathbb{R}^2\setminus \overline{D})$ as a solution to the biharmonic wave equation \eqref{vwe}, satisfying
\begin{align}\label{Rellicheqn}
\lim_{r\to\infty}\int_{|x|=r}|u^s(x)|^2\mathrm{d}s=0. 
\end{align}
Then, it follows that $u^s_{\rm H}=0$ in $\mathbb{R}^2\setminus \overline{D}$. 
\end{lemma}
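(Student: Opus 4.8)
The plan is to decompose $u^s = u^s_{\rm H} + u^s_{\rm M}$ as in \eqref{decomposition} and exploit the fundamentally different asymptotic behavior of the two components: $u^s_{\rm M}$ decays exponentially (as recorded at the end of Section 3), whereas $u^s_{\rm H}$ is a radiating Helmholtz solution to which Rellich's lemma applies. First I would use the exponential decay estimate $u^s_{\rm M}(x)=\mathcal{O}(\mathrm{e}^{-\kappa|x|}/|x|^{1/2})$ to conclude that
\[
\lim_{r\to\infty}\int_{|x|=r}|u^s_{\rm M}(x)|^2\,\mathrm{d}s = 0.
\]
Combining this with the hypothesis \eqref{Rellicheqn} and the triangle inequality in $L^2(\partial B(O;r))$ gives $\lim_{r\to\infty}\int_{|x|=r}|u^s_{\rm H}(x)|^2\,\mathrm{d}s = 0$.

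Next, $u^s_{\rm H}$ satisfies the Helmholtz equation \eqref{vHvM} in $\mathbb{R}^2\setminus\overline{D}$ together with the Sommerfeld radiation condition \eqref{sc1}. Applying the classical Rellich lemma (and the unique continuation principle for the Helmholtz equation, to propagate the vanishing from the exterior of a large ball into the connected exterior domain $\mathbb{R}^2\setminus\overline{D}$) yields $u^s_{\rm H}=0$ in $\mathbb{R}^2\setminus\overline{D}$, which is the assertion.

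There is one technical subtlety worth flagging as the main point requiring care: the decomposition \eqref{vhvm} defines $u^s_{\rm H}$ and $u^s_{\rm M}$ directly in terms of $u^s$ and $\Delta u^s$, so one must make sure these are genuinely the radiating solutions of \eqref{vHvM}--\eqref{sc1} to which the stated exponential decay and Rellich's lemma apply. Since $u^s\in\mathcal{C}^4(\mathbb{R}^2\setminus\overline{D})$ solves \eqref{vwe}, the functions $u^s_{\rm H}$, $u^s_{\rm M}$ automatically solve the split equations \eqref{vHvM}; the radiation conditions \eqref{sc1} follow from \eqref{sc} via \eqref{vhvm} exactly as derived in Section 2. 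Hence the exponential-decay property quoted from \cite[Remark 2.1]{DL2023} is applicable to $u^s_{\rm M}$, and the remainder of the argument is the standard Rellich/unique-continuation step. The only real obstacle is bookkeeping — confirming that no regularity or radiation hypothesis is missing when invoking those two external facts — rather than any substantive new estimate.
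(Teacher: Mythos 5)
Your proposal is correct and follows essentially the same route as the paper: both split $u^s=u^s_{\rm H}+u^s_{\rm M}$, use the exponential decay of $u^s_{\rm M}$ to transfer the vanishing of $\int_{|x|=r}|u^s|^2\,\mathrm{d}s$ to $u^s_{\rm H}$ (the paper expands the square while you use the $L^2$ triangle inequality, an immaterial difference), and then invoke Rellich's lemma for the Helmholtz component. Your closing remark that \eqref{vhvm} indeed produces radiating solutions of \eqref{vHvM}--\eqref{sc1} is a sensible bookkeeping check that the paper leaves implicit.
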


\begin{proof}
Given that $u^s_{\rm M}$ decays exponentially and $u^s_{\rm H}$ is bounded in $\mathbb{R}^2\setminus\overline{D}$, we have
\begin{align*}
\lim_{r\to\infty}\int_{|x|=r}|u^s_{\rm H}(x)|^2\mathrm{d}s=\lim_{r\to\infty}\int_{|x|=r}\Big(|u^s(x)|^2+|u^s_{\rm M}(x)|^2-2\Re(u^s(x)\overline{u^s_{\rm M}(x)})\Big)\mathrm{d}s=0, 
\end{align*}
which completes the proof by invoking Rellich's lemma \cite[Lemma 2.12]{DR-shu2}.
\end{proof}

\begin{lemma} \label{Rellich1}
Consider a solution $u^s\in \mathcal{C}^4(\mathbb{R}^2\setminus \overline{D})$ to the biharmonic wave equation, wherein the far-field pattern $u^\infty=0$. Then, it follows that $u^s_{\rm H}=0$ in $\mathbb{R}^2\setminus \overline{D}$. 
\end{lemma}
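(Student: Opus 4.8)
The plan is to reduce Lemma~\ref{Rellich1} to Lemma~\ref{Rellich} by showing that a vanishing far-field pattern forces the hypothesis \eqref{Rellicheqn}. The key observation is the identity $u^\infty = u_{\rm H}^\infty$ recorded just before Section~4: since $u^\infty = 0$, we immediately get $u_{\rm H}^\infty = 0$ on $\Omega$. Now $u^s_{\rm H}$ is a radiating solution of the Helmholtz equation in $\mathbb{R}^2\setminus\overline{D}$ by \eqref{vHvM} and \eqref{sc1}, so the classical Rellich lemma together with unique continuation (as in \cite[Lemma 2.12 and Theorem 2.14]{DR-shu2}) gives $u^s_{\rm H} = 0$ in the unbounded component of $\mathbb{R}^2\setminus\overline{D}$, which is exactly the desired conclusion. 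Since $D$ is a cavity with connected exterior, this is all of $\mathbb{R}^2\setminus\overline{D}$.

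First I would invoke \eqref{far} (or simply the established equality $u^\infty=u_{\rm H}^\infty$) to conclude $u_{\rm H}^\infty=0$. Second, I would note that $u^s_{\rm H}$ satisfies the Helmholtz equation \eqref{vHvM} and the Sommerfeld radiation condition \eqref{sc1}, hence is a radiating solution in the exterior of $D$. Third, I would apply Rellich's lemma in the form that a radiating Helmholtz solution with vanishing far-field pattern vanishes identically outside any ball containing $D$, and then extend this to all of $\mathbb{R}^2\setminus\overline{D}$ by analytic/unique continuation using the connectedness of the exterior domain. This yields $u^s_{\rm H}=0$ in $\mathbb{R}^2\setminus\overline{D}$.

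Alternatively, one can route the argument through Lemma~\ref{Rellich}: with $u_{\rm H}^\infty=0$, the representation \eqref{far} and the asymptotics \eqref{farf} show $u^s(x)=u^s_{\rm H}(x)+u^s_{\rm M}(x)$ has far-field pattern zero and $u^s_{\rm M}$ decays exponentially, so $\lim_{r\to\infty}\int_{|x|=r}|u^s(x)|^2\mathrm{d}s=0$; then Lemma~\ref{Rellich} applies verbatim. This is the cleaner presentation because it reuses the computation already done in the previous lemma rather than re-deriving the exterior Rellich argument.

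I do not expect a genuine obstacle here; the content is essentially bookkeeping once the identity $u^\infty = u_{\rm H}^\infty$ is in hand. The only point requiring a little care is making sure the vanishing is propagated from a neighborhood of infinity into the full exterior $\mathbb{R}^2\setminus\overline{D}$ — this relies on $\mathbb{R}^2\setminus\overline{D}$ being connected (true since $D$ is a bounded cavity) and on unique continuation for the Helmholtz equation, both of which are standard and already used implicitly in \cite{DR-shu2}. I would therefore keep the proof to two or three sentences, citing Lemma~\ref{Rellich} and the relation $u^\infty=u_{\rm H}^\infty$.
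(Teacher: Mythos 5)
Your proposal is correct, and the route you yourself single out as the cleaner presentation is exactly the paper's proof: from the asymptotics \eqref{farf} one gets $\int_{|x|=r}|u^s|^2\,\mathrm{d}s=\int_{\Omega}|u^\infty|^2\,\mathrm{d}s+\mathcal{O}(1/r)$, so $u^\infty=0$ yields \eqref{Rellicheqn} and Lemma~\ref{Rellich} applies. Your primary variant --- using $u^\infty=u_{\rm H}^\infty$ and applying the classical Rellich lemma directly to the radiating Helmholtz component $u^s_{\rm H}$ --- is an equally valid minor rearrangement of the same ingredients, so there is nothing to correct.
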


\begin{proof}
From \eqref{farf}, we have
\begin{align*}
\int_{|x|=r}|u^s(x)|^2\,\mathrm{d}s=\int_{\Omega}|u^\infty(\hat{x})|^2\mathrm{d}s+\mathcal{O}\Big(\frac{1}{r}\Big), \quad r\to\infty.
\end{align*}
The condition $u^\infty=0$ indicates that \eqref{Rellicheqn} is fulfilled. Therefore, the lemma is an immediate consequence of Lemma \ref{Rellich}.
\end{proof} 

For point sources $w^i(x,z)=G_{\rm H}(x,z)$ and $v^i(x,z)=G(x,z)$ located at $z\in\mathbb{R}^2\setminus \overline{D}$, we denote the corresponding total fields as $w(x,z)$ and $v(x,z)$, respectively. These fields can be decomposed into $w=w^i+w^s$ and $v=v^i+v^s$. Here, the scattered fields are represented by $w^s=w^s_{\rm H}+w^s_{\rm M}$ and $v^s=v^s_{\rm H}+v^s_{\rm M}$, while the far-field patterns corresponding to $w^s_{\rm H}$ and $v^s_{\rm H}$ are denoted as $w_{\rm H}^\infty(\hat{x},z)$ and $v_{\rm H}^\infty(\hat{x},z)$, respectively. The subsequent results focus on the mixed reciprocity and symmetry relations concerning these far-field patterns and scattered fields.

\begin{theorem}\label{Mixed}
For $z\in\mathbb{R}^2\setminus \overline{D}$ and $d\in\Omega$, the following relations hold: 
\begin{align}\label{mixrel}
\frac{\sqrt{8\kappa\pi}}{e^{\mathrm{i}\pi/4}} w_{\rm H}^\infty(-d,z)=u^s_{\rm H}(z,d),\quad \frac{\sqrt{8\kappa\pi}}{e^{\mathrm{i}\pi/4}} v_{\rm H}^\infty(-d,z)=-\frac{1}{2\kappa^2}u^s(z,d).
\end{align}
\end{theorem}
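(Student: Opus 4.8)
The plan is to establish each of the two mixed reciprocity relations in \eqref{mixrel} by the same mechanism used for the acoustic case in \cite[Theorem 3.16 or similar]{DR-shu2}: apply a Green's-type identity on a large ball $B(O;r)\setminus\overline D$ to the pair consisting of the scattered field due to a plane wave and the total field due to a point source, then let $r\to\infty$ so that the integral over $\partial B(O;r)$ extracts the far-field pattern on one side and the point-source evaluation on the other, while the integral over $\Gamma$ vanishes because both fields satisfy the clamped boundary conditions. Concretely, for the first identity I would work with the Helmholtz components: $u^s_{\rm H}(\cdot,d)$ is a radiating solution of the Helmholtz equation in $\mathbb R^2\setminus\overline D$, and $w_{\rm H}(\cdot,z)=G_{\rm H}(\cdot,z)+w^s_{\rm H}(\cdot,z)$ is the Helmholtz component of the point-source total field. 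The key relations are the Green's representation \eqref{v_H} for $u^s_{\rm H}$ and the far-field formula \eqref{far} applied to $w^s_{\rm H}$, together with the reciprocity $G_{\rm H}(x,y)=G_{\rm H}(y,x)$ and the asymptotics of $G_{\rm H}$. I expect that inserting $x=z$ into \eqref{v_H}, recognizing the boundary integral as (up to the normalizing constant $\sqrt{8\kappa\pi}/e^{\mathrm i\pi/4}$) the far-field pattern $w_{\rm H}^\infty(-d,z)$ evaluated in direction $-d$, and using that $u^s_{\rm H}$ and $u^s_{\rm M}$ share the same boundary Cauchy data structure from \eqref{bccouple}, yields the first relation after the $\Gamma$-integrals involving $w^s_{\rm H}$ cancel against those involving $w^i$ by the boundary conditions.

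For the second identity I would instead use the biharmonic Green's representation of Theorem \ref{Greenth2}: the total field $v(\cdot,z)$ for the point source $v^i=G(\cdot,z)$ has scattered part $v^s$ represented by \eqref{Bihar2} in terms of $W$ and $U$ over $\Gamma$, while the far-field pattern of $v^s_{\rm H}$ is given by \eqref{far} (equivalently by the Helmholtz part of \eqref{Bi_farfield}). Evaluating the representation \eqref{Bihar2} at $x=z$, exploiting the symmetry $G(x,y)=G(y,x)$ and the analogous symmetries $P_yG(x,y)$, $Q_yG(x,y)$ inherited from the explicit form \eqref{funsol} and \eqref{GHGM}, and then using the clamped boundary conditions $u^s=-u^i$, $\partial_n u^s=-\partial_n u^i$ on $\Gamma$ (so that the $W,U$ functionals of $u^s$ turn into those of $-u^i$), I would match the resulting boundary integral with the defining far-field integral for $v_{\rm H}^\infty(-d,z)$. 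The factor $-\tfrac{1}{2\kappa^2}$ arises from the coefficient in \eqref{funsol} relating $G$ to $G_{\rm M}-G_{\rm H}$, together with the fact that only the Helmholtz constituent $G_{\rm H}$ contributes to the far field (the modified Helmholtz part $G_{\rm M}$ decaying exponentially, as noted after \eqref{far}); matching the $e^{\mathrm i\pi/4}/\sqrt{8\kappa\pi}$ prefactors in \eqref{far} and \eqref{Bi_farfield} then produces exactly the claimed normalization.

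The steps, in order, are: (i) write down the Green's identity on $B(O;r)\setminus\overline D$ for the pair (scattered plane-wave field, total point-source field), splitting $\partial(B(O;r)\setminus\overline D)=\partial B(O;r)\cup\Gamma$; (ii) show the $\partial B(O;r)$ integral converges, as $r\to\infty$, to a multiple of the far-field pattern of the point-source scattered field evaluated in the direction $-d$ — this uses the plane-wave asymptotics $u^i(x)=e^{\mathrm i\kappa d\cdot x}$ and the far-field expansions \eqref{farf}, \eqref{far} and is where the radiation conditions \eqref{sc}, \eqref{sc1} kill the cross terms exactly as in the proof of Theorem \ref{Greenth2}; (iii) show the $\Gamma$ integral reduces, via the clamped boundary conditions \eqref{bc}, \eqref{bcv}, to the value of the appropriate field at $z$; (iv) collect the normalizing constants. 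The main obstacle I anticipate is bookkeeping in step (iii)–(iv): because the biharmonic Green's representation involves the four-term boundary functional $W-U$ (not the simple two-term Helmholtz one), one must carefully verify that the terms involving the \emph{incident} point source $v^i=G(\cdot,z)$ recombine, under the boundary conditions, into precisely $-\tfrac{1}{2\kappa^2}u^s(z,d)$ rather than some other linear combination of $u^s$, $u^s_{\rm H}$, $u^s_{\rm M}$; tracking the roles of $P$, $Q$, the normal derivatives of $\Delta u^s$, and the $G_{\rm M}/G_{\rm H}$ split through this computation is the delicate part. A secondary subtlety is justifying the limiting arguments under only the $\mathcal C^4\cap\mathcal C(\overline D)$-type regularity, which I would handle exactly as in Theorems \ref{Greenth1} and \ref{Greenth2} by first proving the identities for smooth data and then passing to parallel surfaces.
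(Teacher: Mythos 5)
Your overall strategy (combining Green's identities, the representations \eqref{v_H} and \eqref{Bihar2}, and the far-field formulas \eqref{far} and \eqref{Bi_farfield}, then invoking boundary conditions, radiation conditions and decay) is the same as the paper's, but at the two points where the biharmonic structure actually bites your stated mechanism would fail. For the first identity in \eqref{mixrel}: the Helmholtz components do \emph{not} satisfy the clamped conditions individually, so the $\Gamma$-integrals involving $w^i+w^s_{\rm H}$ and $u^i+u^s_{\rm H}$ do not ``cancel by the boundary conditions'' as you assert. What \eqref{bccouple} gives is $u^i+u^s_{\rm H}=-u^s_{\rm M}$ and $\partial_n(u^i+u^s_{\rm H})=-\partial_n u^s_{\rm M}$ on $\Gamma$ (likewise for the point-source field), so after combining the two vanishing Green identities (incident--incident in $D$, scattered--scattered in the exterior) with the representation $u^s_{\rm H}(z,d)=\int_\Gamma\{u^s_{\rm H}\partial_n w^i-w^i\partial_n u^s_{\rm H}\}\mathrm{d}s$ and the far-field formula for $w^\infty_{\rm H}(-d,z)$, one is left with the residual $\int_\Gamma\{w^s_{\rm M}\partial_n u^s_{\rm M}-u^s_{\rm M}\partial_n w^s_{\rm M}\}\mathrm{d}s$, which is not zero for boundary-condition reasons. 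The paper kills it by applying Green's second theorem to these two modified-Helmholtz solutions in $D_R$ and letting $R\to\infty$, using the exponential decay of $u^s_{\rm M}$, $w^s_{\rm M}$ together with the Cauchy--Schwarz inequality. You never supply this step, and it is precisely the ingredient that distinguishes the biharmonic problem from the acoustic prototype you are importing.

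For the second identity, your claim that by \eqref{bcv} ``the $W,U$ functionals of $u^s$ turn into those of $-u^i$'' is incorrect: $U(Pu^s,Qu^s)$ involves the boundary data $\Delta u^s|_\Gamma$ and $\partial_n\Delta u^s|_\Gamma$, which the clamped conditions do not prescribe. The paper's resolution is to adjoin two further vanishing identities --- the biharmonic Green identity \eqref{Green2} for the incident pair $(u^i,v^i)$ in $D$, and for the scattered pair $(u^s,v^s)$ in the exterior via the radiation conditions --- to the representation \eqref{Bihar2} evaluated at $x=z$ and to the far-field formula \eqref{Bi_farfield}; the boundary integrand then assembles into the total fields, namely $\int_\Gamma\{v\partial_n\Delta u-\Delta u\,\partial_n v+\Delta v\,\partial_n u-u\,\partial_n\Delta v\}\mathrm{d}s$, whose four terms all vanish because $u=\partial_n u=0$ and $v=\partial_n v=0$ on $\Gamma$. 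You flag this bookkeeping as ``the delicate part'' but leave it unresolved; without the two auxiliary identities the unknown Cauchy data of $\Delta u^s$ and $\Delta v^s$ on $\Gamma$ cannot be eliminated, so the computation does not close as written, even though the normalization factor $-\tfrac{1}{2\kappa^2}$ is correctly traced to \eqref{funsol} and the exponential decay of $G_{\rm M}$.
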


\begin{proof}
By using Green's theorem and the radiation condition for $u^s_{\rm H}$ and $w^s_{\rm H}$, we have 
\begin{align*}
\int_{\Gamma}\Big\{w^i(\cdot,z)\frac{\partial}{\partial n}u^i(\cdot,d)-u^i(\cdot,d)\frac{\partial}{\partial n}w^i(\cdot,z)\Big\}\mathrm{d}s &=0, \\
\int_{\Gamma}\Big\{w^s_{\rm H}(\cdot,z)\frac{\partial}{\partial n}u^s_{\rm H}(\cdot,d)-u^s_{\rm H}(\cdot,d)\frac{\partial}{\partial n}w^s_{\rm H}(\cdot,z)\Big\}\mathrm{d}s &=0.
\end{align*}	
It follows from \eqref{v_H} and \eqref{far} that
\begin{align*}
\int_{\Gamma}\Big\{w^s_{\rm H}(\cdot,z)\frac{\partial}{\partial n}u^i(\cdot,d)-u^i(\cdot,d)\frac{\partial}{\partial n}w^s_{\rm H}(\cdot,z)\Big\}\mathrm{d}s &=\frac{\sqrt{8\kappa\pi}}{e^{\mathrm{i}\pi/4}}w_{\rm H}^\infty(-d,z),\\
\int_{\Gamma}\Big\{u^s_{\rm H}(\cdot,d)\frac{\partial}{\partial n}w^i(\cdot,z)-w^i(\cdot,z)\frac{\partial}{\partial n}u^s_{\rm H}(\cdot,d)\Big\}\mathrm{d}s &=u^s_{\rm H}(z,d).
\end{align*}
Subtracting the last equation from the sum of the three preceding equations and incorporating the boundary condition \eqref{bccouple}, we obtain
\begin{align*}
&\frac{\sqrt{8\kappa\pi}}{e^{\mathrm{i}\pi/4}}w_{\rm H}^\infty(-d,z)-u^s_{\rm H}(z,d)\\
&=\int_{\Gamma}\Big\{\big(w^i(\cdot,z)+w^s_{\rm H}(\cdot,z)\big)\frac{\partial}{\partial n}\big(u^i(\cdot,d)+u^s_{\rm H}(\cdot,d)\big)\\
&\quad -\big(u^i(\cdot,d)+u^s_{\rm H}(\cdot,d)\big)\frac{\partial}{\partial n}\big(w^i(\cdot,z)+w^s_{\rm H}(\cdot,z)\big)\Big\}\mathrm{d}s\\
&=\int_{\Gamma}\Big\{w^s_{\rm M}(\cdot,z)\frac{\partial}{\partial n}u^s_{\rm M}(\cdot,d)-u^s_{\rm M}(\cdot,d)\frac{\partial}{\partial n}w^s_{\rm M}(\cdot,z)\Big\}\mathrm{d}s.
\end{align*}
The proof of the first equation in \eqref{mixrel} is completed through the application of Green's theorem in $D_R:= \{y\in\mathbb{R}^2\setminus \overline{D}: |y|<R\}$, along with the utilization of the Cauchy--Schwarz inequality, and considering the exponential decay of $u^s_{\rm M}$ and $w^s_{\rm M}$. 

Next, we prove the second equation in \eqref{mixrel}. By using Green's theorem \eqref{Green1} and the radiation condition for $u^s_{\rm H}$ and $w^s_{\rm H}$, we have 
\begin{align}
	&\int_{\Gamma}\Big\{v^i(\cdot,z)\frac{\partial}{\partial n}\Delta u^i(\cdot,d)-\Delta u^i(\cdot,d)\frac{\partial}{\partial n}v^i(\cdot,z)\nonumber\\
	&\quad +\Delta v^i(\cdot,z)\frac{\partial}{\partial n}u^i(\cdot,d)-u^i(\cdot,d)\frac{\partial}{\partial n}\Delta v^i(\cdot,z)\Big\}\mathrm{d}s=0, \label{MR1}\\
	-\frac{1}{2\kappa^2}&\int_{\Gamma}\Big\{v^s(\cdot,z)\frac{\partial}{\partial n}\Delta u^s(\cdot,d)-\Delta u^s(\cdot,d)\frac{\partial}{\partial n}v^s(\cdot,z)\nonumber\\
	&\quad +\Delta v^s(\cdot,z)\frac{\partial}{\partial n}u^s(\cdot,d)-u^s(\cdot,d)\frac{\partial}{\partial n}\Delta v^s(\cdot,z)\Big\}\mathrm{d}s=0. \label{MR2}	
\end{align}	
Noting $\Delta u^i + \kappa^2 u^i = 0 $ and the far-field pattern \eqref{Bi_farfield}, we obtain 
\begin{align} \label{MR3}
	&\frac{1}{2\kappa^2}\int_{\Gamma}\Big\{-v^s(\cdot,z)\frac{\partial}{\partial n}\Delta u^i(\cdot,d)+\Delta u^i(\cdot,d)\frac{\partial}{\partial n}v^s(\cdot,z)\Big\}\mathrm{d}s\nonumber\\
	&\quad -\frac{1}{2\kappa^2}\int_{\Gamma}\Big\{\Delta v^s(\cdot,z)\frac{\partial}{\partial n}u^i(\cdot,d)-u^i(\cdot,d)\frac{\partial}{\partial n}\Delta v^s(\cdot,z)\Big\}\mathrm{d}s=\frac{\sqrt{8\kappa\pi}}{e^{\mathrm{i}\pi/4}}v_{\rm H}^\infty(-d,z). 
\end{align}	
It follows from the Green's representation \eqref{Bihar2} that 
\begin{align} \label{MR4}
	&\int_{\Gamma}\Big\{-v^i(\cdot,z)\frac{\partial}{\partial n}\Delta u^s(\cdot,d)+\Delta u^s(\cdot,d)\frac{\partial}{\partial n}v^i(\cdot,z)\Big\}\mathrm{d}s\nonumber\\
	&\quad -\int_{\Gamma}\Big\{\Delta v^i(\cdot,z)\frac{\partial}{\partial n}u^s(\cdot,d)-u^s(\cdot,d)\frac{\partial}{\partial n}\Delta v^i(\cdot,z)\Big\}\mathrm{d}s=u^s(z,d).
\end{align}
By adding \eqref{MR2} and \eqref{MR3}, subtracting \eqref{MR1} from \eqref{MR4}, and using the boundary condition \eqref{bc}, we arrive at 
\begin{align*}
	&\frac{\sqrt{8\kappa\pi}}{e^{\mathrm{i}\pi/4}}v_{\rm H}^\infty(-d,z)+\frac{1}{2\kappa^2}u^s(z,d)\\
	&=-\frac{1}{2\kappa^2}\int_{\Gamma}\Big\{v(\cdot,z)\frac{\partial}{\partial n}\Delta u(\cdot,d)-\Delta u(\cdot,d)\frac{\partial}{\partial n}v(\cdot,z)\\
	&\quad +\Delta v(\cdot,z)\frac{\partial}{\partial n}u(\cdot,d)-u(\cdot,d)\frac{\partial}{\partial n}\Delta v(\cdot,z)\Big\}\mathrm{d}s=0,
\end{align*}
which completes the proof. 
\end{proof}

\begin{theorem}\label{Reci}
	For the scattering of point sources $w^i=G_\sigma$, where $\sigma=H, M$, the following relation holds:
	\begin{align}\label{symm}
		w^s_\sigma(x,z)=w^s_\sigma(z,x), \quad \forall\, x, z\in\mathbb{R}^2\setminus \overline{D}.
	\end{align}
\end{theorem}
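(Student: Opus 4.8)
The plan is to prove the symmetry relation \eqref{symm} separately for the two choices $\sigma = \mathrm{H}$ and $\sigma = \mathrm{M}$ by the classical argument from acoustic scattering, adapted to each of the Helmholtz and modified Helmholtz operators that underlie the biharmonic problem. First I would fix two source points $x, z \in \mathbb{R}^2\setminus\overline{D}$ and introduce the corresponding incident fields $w^i(\cdot,x) = G_\sigma(\cdot,x)$, $w^i(\cdot,z) = G_\sigma(\cdot,z)$, together with their total fields $w(\cdot,x) = w^i(\cdot,x) + w^s_\sigma(\cdot,x)$ and likewise for $z$. The strategy is to apply Green's second identity (for the scalar operator $\Delta \pm \kappa^2$, whichever is relevant) to the pair $w^s_\sigma(\cdot,x)$ and $w^s_\sigma(\cdot,z)$ in the exterior domain $D_R := \{y\in\mathbb{R}^2\setminus\overline{D}: |y|<R\}$, and to the mixed pairs $(w^i(\cdot,x), w^s_\sigma(\cdot,z))$ and $(w^s_\sigma(\cdot,x), w^i(\cdot,z))$.

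The key steps, in order, are: (i) Green's identity applied to the two scattered fields $w^s_\sigma(\cdot,x)$ and $w^s_\sigma(\cdot,z)$ over $D_R$; the contribution on $|y|=R$ vanishes as $R\to\infty$ — for $\sigma=\mathrm{H}$ by the Sommerfeld radiation condition \eqref{sc1} together with the standard Cauchy--Schwarz/$L^2$-bound argument used already in the proof of Theorem \ref{Greenth2}, and for $\sigma=\mathrm{M}$ trivially because $w^s_\mathrm{M}$ decays exponentially as recalled at the end of Section \ref{S3} — leaving only a boundary integral over $\Gamma$ equal to zero. (ii) Green's representation \eqref{v_H} (respectively \eqref{v_M}) evaluated at the points $x$ and $z$ expresses $w^s_\sigma(x,z)$ and $w^s_\sigma(z,x)$ as boundary integrals over $\Gamma$ against $G_\sigma(x,\cdot)$ and $G_\sigma(z,\cdot)$, i.e. against $w^i(\cdot,x)$ and $w^i(\cdot,z)$; crucially one uses the symmetry $G_\sigma(x,y) = G_\sigma(y,x)$ of the scalar fundamental solutions here. (iii) Mixed Green's identities for $(w^i(\cdot,x), w^s_\sigma(\cdot,z))$ and for $(w^i(\cdot,x), w^i(\cdot,z))$ over $D_R$, again sending $R\to\infty$; the latter pair has no singularity in $D_R$ so the $\Gamma$-integral equals the $|y|=R$-integral, which one controls by radiation/decay. (iv) Add and subtract these identities, so that the combinations appearing are exactly the total fields $w(\cdot,x) = w^i(\cdot,x)+w^s_\sigma(\cdot,x)$, and invoke the clamped boundary conditions \eqref{bc}: since each total field $w$ solves the biharmonic problem with $w = 0$ and $\partial_n w = 0$ on $\Gamma$, its Helmholtz/modified-Helmholtz components satisfy the coupled conditions \eqref{bccouple} with $f_1 = -w^i$, $f_2 = -\partial_n w^i$. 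The algebra then collapses the difference $w^s_\sigma(x,z) - w^s_\sigma(z,x)$ into boundary integrals that cancel, exactly as in \cite[Theorem 3.17]{DR-shu2} or the proof of Theorem \ref{Mixed} above.

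There is one genuine subtlety specific to the biharmonic setting that is the main obstacle: the components $w^s_\mathrm{H}(\cdot,z)$ and $w^s_\mathrm{M}(\cdot,z)$ are \emph{not} themselves solutions of a pure exterior Dirichlet problem for their respective operators — they are coupled through the boundary conditions \eqref{bccouple}, and the natural boundary data for $w^s_\mathrm{H}$ alone is not known. Hence when one applies Green's identity on $\Gamma$ one cannot simply substitute a homogeneous or explicitly known trace; instead one must carry both components together and rely on the \emph{sum} $w^s_\mathrm{H} + w^s_\mathrm{M}$ and its normal derivative being $f_1, f_2$ on $\Gamma$, while the cross terms involving $w^s_\mathrm{M}(\cdot,x)$ paired against $w^s_\mathrm{H}(\cdot,z)$ (and vice versa) must be shown to vanish. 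For $\sigma=\mathrm{M}$ these cross terms are handled by exponential decay; for $\sigma=\mathrm{H}$ one needs a separate Green's identity argument for the pair $(w^s_\mathrm{H}(\cdot,x), w^s_\mathrm{M}(\cdot,z))$ over $D_R$, whose $|y|=R$ contribution again vanishes (one field radiating, the other decaying exponentially). Once these mixed terms are disposed of, the remaining identity is purely in terms of boundary traces on $\Gamma$ to which \eqref{bc} applies, and the symmetry \eqref{symm} follows. I would organize the write-up to treat $\sigma=\mathrm{M}$ first (cleaner, pure exponential decay) and then $\sigma=\mathrm{H}$, pointing out that the argument is parallel to that of Theorem \ref{Mixed}.
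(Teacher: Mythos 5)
Your outline of steps (i)--(iv) reproduces the paper's argument (the two Green identities, the two representation formulas obtained from \eqref{v_H}--\eqref{v_M}, their combination, and the use of the clamped boundary condition), but the paragraph you single out as the ``main obstacle'' contains a genuine error. You propose to dispose of cross terms pairing $w^s_{\rm H}(\cdot,x)$ with $w^s_{\rm M}(\cdot,z)$ by a Green's identity over $D_R$ whose contribution on $|y|=R$ vanishes. The $|y|=R$ contribution does vanish, but the conclusion you want does not follow: the two fields solve \emph{different} equations, $\Delta w^s_{\rm H}+\kappa^2 w^s_{\rm H}=0$ and $\Delta w^s_{\rm M}-\kappa^2 w^s_{\rm M}=0$, so Green's second identity leaves the volume term $2\kappa^2\int_{D_R}w^s_{\rm H}(\cdot,x)\,w^s_{\rm M}(\cdot,z)\,\mathrm{d}y$, which converges thanks to the exponential decay of $w^s_{\rm M}$ but is not zero in general. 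Hence the $\Gamma$-integral of such a mixed H--M pair does not vanish, and a proof organized around that claim would break down at exactly the point you identify as decisive.

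The paper's proof shows that no such cross terms arise, so the lemma you are missing is not needed. After subtracting the second representation formula from the sum of the other three identities, the difference $w^s_\sigma(x,z)-w^s_\sigma(z,x)$ equals the $\Gamma$-integral of the Wronskian-type expression built from $w^i+w^s_\sigma$ evaluated at the two source points. The clamped condition, in the coupled form \eqref{bccouple}, gives $w^i+w^s_\sigma=-w^s_{\sigma'}$ and $\partial_n(w^i+w^s_\sigma)=-\partial_n w^s_{\sigma'}$ on $\Gamma$, where $\sigma'$ is the \emph{complementary} index; substituting this in both slots turns the integrand into the same-component pair $w^s_{\sigma'}(\cdot,z)$, $w^s_{\sigma'}(\cdot,x)$. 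These two fields satisfy the same equation, so the exterior Green identity applies cleanly: both are radiating Helmholtz solutions when $\sigma'={\rm H}$, both decay exponentially when $\sigma'={\rm M}$, and the integral vanishes, giving \eqref{symm}. This symmetric substitution is what your write-up should use in place of the cross-term argument. One further small slip: the pair $(w^i(\cdot,x),w^i(\cdot,z))$ is \emph{not} singularity-free in $D_R$ (its singularities at $x$ and $z$ lie there); the identity $\int_\Gamma\{w^i(\cdot,z)\partial_n w^i(\cdot,x)-w^i(\cdot,x)\partial_n w^i(\cdot,z)\}\,\mathrm{d}s=0$ comes from applying Green's theorem in the interior domain $D$, where both incident fields are smooth.
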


\begin{proof}
Analogous to the proof of Theorem \ref{Mixed}, employing Green's theorem and considering the radiation condition for $w^s_{\rm H}$ and $w^s_{\rm M}$, we derive 
	\begin{align*}
		\int_{\Gamma}\Big\{w^i(\cdot,z)\frac{\partial}{\partial n}w^i(\cdot,x)-w^i(\cdot,x)\frac{\partial}{\partial n}w^i(\cdot,z)\Big\}\mathrm{d}s &=0, \\
		\int_{\Gamma}\Big\{w^s_\sigma(\cdot,z)\frac{\partial}{\partial n}w^s_\sigma(\cdot,x)-w^s_\sigma(\cdot,x)\frac{\partial}{\partial n}w^s_\sigma(\cdot,z)\Big\}\mathrm{d}s&=0,
	\end{align*}	
along with the solution representations
	\begin{align*}
		&\int_{\Gamma}\Big\{w^s_\sigma(\cdot,z)\frac{\partial}{\partial n}w^i(\cdot,x)-w^i(\cdot,x)\frac{\partial}{\partial n}w^s_\sigma(\cdot,z)\Big\}\mathrm{d}s=w^s_\sigma(x,z),\\
		&\int_{\Gamma}\Big\{w^s_\sigma(\cdot,x)\frac{\partial}{\partial n}w^i(\cdot,z)-w^i(\cdot,z)\frac{\partial}{\partial n}w^s_\sigma(\cdot,x)\Big\}\mathrm{d}s=w^s_\sigma(z,x).
	\end{align*}
Again, subtracting the last equation from the sum of the three preceding equations and using the boundary condition \eqref{bccouple}, we obtain 
	\begin{align*}
		&w^s_\sigma(x,z)-w^s_\sigma(z,x)\\
		&=\int_{\Gamma}\Big\{\big(w^i(\cdot,z)+w^s_\sigma(\cdot,z)\big)\frac{\partial}{\partial n}\big(w^i(\cdot,x)+w^s_\sigma(\cdot,x)\big)\\
		&\quad -\big(w^i(\cdot,x)+w^s_\sigma(\cdot,x)\big)\frac{\partial}{\partial n}\big(w^i(\cdot,z)+w^s_\sigma(\cdot,z)\big)\Big\}\mathrm{d}s\\
		&=\int_{\Gamma}\Big\{w^s_{\sigma'}(\cdot,z)\frac{\partial}{\partial n}w^s_{\sigma'}(\cdot,x)-w^s_{\sigma'}(\cdot,x)\frac{\partial}{\partial n}w^s_{\sigma'}(\cdot,z)\Big\}\mathrm{d}s=0,
	\end{align*}
where $\sigma'=M, H$, which implies that \eqref{symm} holds.
\end{proof}

Based on Theorem \ref{Reci} and the well-posedness of the exterior boundary value problem for  $w^s_{\sigma'}$, we have $w^s_{\sigma'}(x,z)=w^s_{\sigma'}(z,x)$. Using the superposition principle of the scattered field, we can derive the reciprocity relation for the incident wave $v^i(x,z)=G(x,z)$, expressed as
	\begin{align}\label{symmv}
	v^s(x,z)=v^s(z,x), \quad \forall\, x, z\in\mathbb{R}^2\setminus \overline{D}.
\end{align}

Now, we demonstrate the uniqueness obtained from far-field patterns for the inverse cavity scattering problem of the biharmonic wave equation. 

\begin{theorem}\label{uniqueness1}
	Let $D_1$ and $D_2$ be two cavities meeting the boundary condition \eqref{bc}, with corresponding far-field patterns
    $u_1^\infty$ and $u_2^\infty$ satisfying 
	\begin{align} \label{UniqueD1D2}
		u_{1}^\infty(\hat{x},d)=u_{2}^\infty(\hat{x},d),\quad\forall\, \hat{x}, d\in\Omega.
	\end{align}
Then $D_1=D_2$.
\end{theorem}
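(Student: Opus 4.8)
The plan is to argue by contradiction, adapting the point-source technique used for the acoustic obstacle problem in \cite[Section~5]{DR-shu2} (see also \cite{CD2023}) to the coupled Helmholtz/modified-Helmholtz formulation \eqref{vHvM}--\eqref{bccouple}. Suppose $D_1\neq D_2$, and let $G$ be the unbounded connected component of $\mathbb{R}^2\setminus(\overline{D_1}\cup\overline{D_2})$. Attaching the subscript $j$ to all quantities associated with the cavity $D_j$, the argument proceeds in three stages: first, \eqref{UniqueD1D2} is used to show that the Helmholtz components of the plane-wave scattered fields coincide in $G$; second, the mixed reciprocity relation \eqref{mixrel} converts this into the statement that the Helmholtz components of the point-source scattered fields coincide in $G$; third, letting the source approach a suitable boundary point yields a contradiction between a boundedness statement for $D_2$ and a blow-up statement for $D_1$.

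For the first stage, the difference $u^s_1(\cdot,d)-u^s_2(\cdot,d)$ is a radiating solution of \eqref{vwe} in $G$ whose far-field pattern vanishes by \eqref{UniqueD1D2}; running the argument of Lemma~\ref{Rellich} (equivalently Lemma~\ref{Rellich1}) in the unbounded component $G$ then gives $u^s_{1,\mathrm H}(z,d)=u^s_{2,\mathrm H}(z,d)$ for all $z\in G$ and $d\in\Omega$. For the second stage, the first identity in \eqref{mixrel}, namely $\frac{\sqrt{8\kappa\pi}}{e^{\mathrm{i}\pi/4}}w^\infty_{j,\mathrm H}(-d,z)=u^s_{j,\mathrm H}(z,d)$ for the cavity $D_j$, shows that $w^\infty_{1,\mathrm H}(-d,z)=w^\infty_{2,\mathrm H}(-d,z)$ for all $d\in\Omega$ and $z\in G$; fixing $z\in G$, the scattered fields $w^s_1(\cdot,z)$ and $w^s_2(\cdot,z)$ are then radiating solutions of \eqref{vwe} with the same far-field pattern (recall $u^\infty=u^\infty_{\mathrm H}$), so a second application of Rellich's lemma in $G$ yields $w^s_{1,\mathrm H}(x,z)=w^s_{2,\mathrm H}(x,z)$ for all $x,z\in G$. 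The symmetry relation \eqref{symm} allows this to be read, if convenient, as an identity between scattered fields generated by a single fixed point source.

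For the third stage, since $D_1\neq D_2$ one may, after interchanging the two cavities if necessary, fix $x^\ast\in\partial D_1\cap\partial G$ with $x^\ast\notin\overline{D_2}$, and set $z_m:=x^\ast+\tfrac1m n(x^\ast)$, so that $z_m\in G$ for large $m$, $z_m\to x^\ast$, and $\operatorname{dist}(z_m,\overline{D_2})\geq\delta>0$. Passing to the boundary limit from $G$ in the identity of the second stage gives $w^s_{1,\mathrm H}(x^\ast,z_m)=w^s_{2,\mathrm H}(x^\ast,z_m)$. On the $D_2$ side the right-hand side stays bounded: the Cauchy data of $w^i(\cdot,z_m)=G_{\mathrm H}(\cdot,z_m)$ on $\partial D_2$ converge in $C^1(\partial D_2)$ as $z_m\to x^\ast$ because the sources remain at a fixed positive distance from $\overline{D_2}$, and well-posedness of the exterior problem \eqref{vHvM}--\eqref{sc1} then gives $w^s_{2,\mathrm H}(x^\ast,z_m)\to w^s_{2,\mathrm H}(x^\ast,x^\ast)$. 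On the $D_1$ side, the clamped conditions \eqref{bccouple} on $\partial D_1$ give $w^s_{1,\mathrm H}(x^\ast,z_m)=-G_{\mathrm H}(x^\ast,z_m)-w^s_{1,\mathrm M}(x^\ast,z_m)$, and $G_{\mathrm H}(x^\ast,z_m)$ has a logarithmic singularity as $z_m\to x^\ast\in\partial D_1$; representing $w^s_{1,\mathrm H}$ and $w^s_{1,\mathrm M}$ through their boundary integral formulas \eqref{v_H}--\eqref{v_M} in terms of the Cauchy data on $\partial D_1$, which is governed by the well-posed coupled system \eqref{bccouple}, one shows that $|w^s_{1,\mathrm H}(x^\ast,z_m)|\to\infty$. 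This contradicts the bound obtained on the $D_2$ side, so $\partial D_1=\partial D_2$ and hence $D_1=D_2$.

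The main obstacle is precisely this last point. In the acoustic case of \cite[Theorem~5.6]{DR-shu2} the far-field pattern determines the entire scattered field, and the Dirichlet condition forces that field itself to blow up as the source hits the boundary; here the far-field pattern only sees the Helmholtz component, since the modified-Helmholtz component decays exponentially and is invisible at infinity, so one must additionally rule out that the singularity of the incident point source is absorbed into the boundary-singular but decaying component $w^s_{1,\mathrm M}$. Quantifying how the boundary singularity splits between the two components---via the mapping properties and invertibility of the boundary integral operators behind \eqref{v_H}--\eqref{v_M} together with the coupled boundary conditions \eqref{bccouple}, as developed in \cite{DL2023}---is the technical core of the proof.
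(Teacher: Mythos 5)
Your overall architecture coincides with the paper's: identify the Helmholtz components of the plane-wave scattered fields via a Rellich-type argument, pass to point sources through the mixed reciprocity relation \eqref{mixrel}, apply Rellich's lemma again together with the symmetry relation \eqref{symm} to get $w^s_{{\rm H},1}(x,z)=w^s_{{\rm H},2}(x,z)$ in the common exterior, and then drive a contradiction by letting $z_m=x^*+\tfrac1m n(x^*)$ approach a point $x^*\in\partial D_1\setminus\overline{D_2}$. The genuine gap is exactly the step you yourself flag as the ``technical core'': you assert that $|w^s_{{\rm H},1}(x^*,z_m)|\to\infty$, i.e.\ that the logarithmic singularity of $G_{\rm H}(x^*,z_m)$ cannot be cancelled by $w^s_{{\rm M},1}(x^*,z_m)$, and you propose to extract this from mapping properties and invertibility of the boundary integral operators of \cite{DL2023}, but you never carry that analysis out. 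As written, nothing in your argument excludes the possibility that the point-source singularity is absorbed by the modified-Helmholtz component of the $D_1$-scattered field, so the contradiction on which the whole proof rests is not established.

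The paper closes this gap with one further observation that your proposal misses, and it requires no boundary-integral machinery: once $w^s_{{\rm H},1}=w^s_{{\rm H},2}$ in the exterior of $D_1\cup D_2$, the clamped condition on each $\partial D_j$ shows that $w^s_{{\rm M},1}$ and $w^s_{{\rm M},2}$ are exponentially decaying solutions of the same exterior Dirichlet problem for the modified Helmholtz equation, hence $w^s_{{\rm M},1}(x,z)=w^s_{{\rm M},2}(x,z)$ there (identity \eqref{wMequ} in the paper). Since $x^*$ stays at a positive distance from $D_2$, the field $w^s_{{\rm M},2}(x^*,\cdot)$ is continuous and bounded up to $z=x^*$, and by \eqref{wMequ} this boundedness transfers to $w^s_{{\rm M},1}(x^*,z_m)$; the boundary identity $w^s_{{\rm H},1}(x^*,z_m)=-w^i_{\rm H}(x^*,z_m)-w^s_{{\rm M},1}(x^*,z_m)$ then forces $w^s_{{\rm H},1}(x^*,z_m)\to\infty$, contradicting the bounded limit $w^s_{{\rm H},2}(x^*,z_m)\to w^s_{{\rm H},2}(x^*,x^*)$ obtained on the $D_2$ side. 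So either supply the splitting estimate you invoke for the operators of \cite{DL2023}, or insert this M-component identification step, which is the simpler and complete way to finish the argument.
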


\begin{proof}
Building upon \eqref{UniqueD1D2} and Lemma \ref{Rellich}, we deduce that the respective scattered fields related to $D_1$ and $D_2$ satisfy 
	\begin{align*}
		u^s_{{\rm H},1}(x,d)=u^s_{{\rm H},2}(x,d), \quad x\in\mathbb{R}^2\setminus \overline{D_1\cup D_2}.
	\end{align*} 
It follows from the mixed reciprocity relation \eqref{mixrel} that 
\begin{align*}
	w_{{\rm H},1}^\infty(-d,x)=w_{{\rm H},2}^\infty(-d,x). 
\end{align*}
With the help of Lemma \ref{Rellich} and Theorem \ref{Reci}, and noting the continuity of the scattered field, we obtain
\begin{align}\label{wHequ}
w^s_{{\rm H},1}(x,z)=w^s_{{\rm H},2}(x,z), \quad\forall\, x, z\in\mathbb{R}^2\setminus \{D_1\cup D_2\}.
\end{align}	
Consequently, we have
\begin{align}\label{wMequ}
w^s_{{\rm M},1}(x,z)=w^s_{{\rm M},2}(x,z), \quad\forall\, x, z\in\mathbb{R}^2\setminus \{D_1\cup D_2\},
\end{align}
because $w^s_{{\rm M},j}$, where $j=1,2$, are the exponentially decaying solutions to the same exterior boundary value problem:
\begin{equation*}
\left\{
\begin{aligned}
	&\Delta w^s_{{\rm M},j} - \kappa^2 w^s_{{\rm M},j} = 0 &&\quad {\rm in} ~ \mathbb{R}^2\setminus \{D_1\cup D_2\},\\
	& w^s_{{\rm M},j} = -w^s_{{\rm H},j}-w^i_{\rm H} &&\quad {\rm on} ~ \partial D_1\cup \partial D_2.
\end{aligned}
\right.
\end{equation*}

Applying proof by contradiction, suppose $D_1\neq D_2$. Without loss of generality, let $x^*\in \partial D_1$ and $x^*\notin \overline{D}_2$. Define 
\begin{align*}
	z_m=x^*+\frac{1}{m} n(x^*)\in\mathbb{R}^2\setminus \{D_1\cup D_2\}, \quad m=1,2,\cdots
\end{align*}
for sufficiently large $m$. On one hand, utilizing the reciprocity relation \eqref{symm} and the well-posedness of the direct scattering problem, we derive 
\begin{align}\label{contradiction1}
	\lim_{m\to\infty} w^s_{{\rm H},2}(x^*,z_m)=w^s_{{\rm H},2}(x^*,x^*).
\end{align}
On the other hand, considering \eqref{wMequ}, the boundary condition $w=0$, and the continuity as well as boundedness of the scattered field $w^s_{{\rm M},2}(x^*,\cdot)$, we have
\begin{align}\label{contradiction2}
	\lim_{m\to\infty} w^s_{{\rm H},1}(x^*,z_m)&=\lim_{m\to\infty}\big(-w^s_{{\rm M},1}(x^*,z_m)-w^i_{\rm H}(x^*,z_m)\big)\nonumber\\
	&=\lim_{m\to\infty} \big(-w^s_{{\rm M},2}(x^*,z_m)-w^i_{\rm H}(x^*,z_m)\big)\nonumber\\
	&=-w^s_{{\rm M},2}(x^*,x^*)-\lim_{m\to\infty} G_{\rm H}(x^*,z_m)=\infty.
\end{align}
In view of \eqref{contradiction1} and \eqref{contradiction2}, we observe a contradiction with \eqref{wHequ}. Consequently, we establish that $D_1=D_2$. 
\end{proof}

Next, we investigate uniqueness by employing phaseless near-field data. To initiate, we explore a translational characteristic exhibited by the far-field pattern associated with a domain $D$.

\begin{theorem}\label{ti}
Considering $D_h:=\{x+h:x\in D\}$ with $h\in \mathbb{R}^2$, the far-field pattern corresponding to the incident plane wave $u^i(x,d)=\mathrm{e}^{\mathrm{i}\kappa d\cdot x}$ satisfies the following relation: 
\begin{align}\label{transinvar}
u^\infty(\hat{x};D_h)=\mathrm{e}^{\mathrm{i}\kappa(d-\hat{x})\cdot h}u^\infty(\hat{x};D),\quad \hat x\in\Omega.
\end{align}
\end{theorem}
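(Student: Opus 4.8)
The plan is to reduce the statement to the uniqueness (well-posedness) of the direct cavity scattering problem by exhibiting the scattered field for $D_h$ as an explicit translate of the scattered field for $D$, and then to read off the far-field pattern from the asymptotic expansion \eqref{farf}. Let $u^s(\cdot\,;D)$ denote the scattered field associated with the cavity $D$ and the incident plane wave $u^i(x,d)=\mathrm{e}^{\mathrm{i}\kappa d\cdot x}$, and set
\[
\tilde u^s(x):=\mathrm{e}^{\mathrm{i}\kappa d\cdot h}\,u^s(x-h\,;D),\qquad x\in\mathbb R^2\setminus\overline{D_h}.
\]
Since the biharmonic wave operator $\Delta^2-\kappa^4$ has constant coefficients, $\tilde u^s$ solves \eqref{vwe} in $\mathbb R^2\setminus\overline{D_h}$, and because translation (together with multiplication by a constant of modulus one) preserves the Sommerfeld radiation conditions \eqref{sc}, $\tilde u^s$ is radiating.

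The next step is to check the boundary data. For $x\in\partial D_h$ write $x=y+h$ with $y\in\Gamma$; the unit outward normal at $x$ coincides with that at $y$. Using the clamped data \eqref{bcv} for $u^s(\cdot\,;D)$ on $\Gamma$ one gets
\[
\tilde u^s(x)=\mathrm{e}^{\mathrm{i}\kappa d\cdot h}\bigl(-\mathrm{e}^{\mathrm{i}\kappa d\cdot y}\bigr)=-\mathrm{e}^{\mathrm{i}\kappa d\cdot x}=-u^i(x,d),
\]
and a similar computation with the normal derivative gives $\partial_n\tilde u^s(x)=-\partial_n u^i(x,d)$ on $\partial D_h$. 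Hence $\tilde u^s$ satisfies exactly the boundary value problem \eqref{vwe}--\eqref{sc} that characterizes the scattered field for the cavity $D_h$. Invoking the well-posedness of the direct scattering problem established in \cite{DL2023} (equivalently, uniqueness for \eqref{vHvM}--\eqref{sc1}), we conclude that
\[
u^s(x\,;D_h)=\mathrm{e}^{\mathrm{i}\kappa d\cdot h}\,u^s(x-h\,;D),\qquad x\in\mathbb R^2\setminus\overline{D_h}.
\]

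Finally I would pass to the far field. As $|x|\to\infty$ one has $|x-h|=|x|-\hat x\cdot h+\mathcal O(|x|^{-1})$ and $\widehat{x-h}\to\hat x$, so substituting the expansion \eqref{farf} for $u^s(\cdot\,;D)$ evaluated at $x-h$ yields
\[
u^s(x-h\,;D)=\frac{\mathrm{e}^{\mathrm{i}\kappa|x|}}{|x|^{1/2}}\,\mathrm{e}^{-\mathrm{i}\kappa\hat x\cdot h}\,u^\infty(\hat x;D)+\mathcal O\!\left(\frac{1}{|x|^{3/2}}\right).
\]
Multiplying by $\mathrm{e}^{\mathrm{i}\kappa d\cdot h}$, comparing with the expansion \eqref{farf} for $u^s(\cdot\,;D_h)$, and matching the coefficient of $\mathrm{e}^{\mathrm{i}\kappa|x|}/|x|^{1/2}$ gives \eqref{transinvar}. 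The only delicate point is the asymptotic bookkeeping in this last step — keeping track that the perturbations in $|x-h|$, in the amplitude $|x-h|^{-1/2}$, and in the argument $\widehat{x-h}$ of $u^\infty(\,\cdot\,;D)$ all contribute only to the $\mathcal O(|x|^{-3/2})$ remainder, so that the exponential factor $\mathrm{e}^{-\mathrm{i}\kappa\hat x\cdot h}$ is the sole surviving correction; everything else is routine. Alternatively, one could bypass the asymptotics and argue directly from the boundary-integral far-field formula \eqref{Bi_farfield} (or \eqref{far}, since $u^\infty=u_{\rm H}^\infty$), changing variables $y\mapsto y+h$ on $\partial D_h$ and using the identity for $u^s(\cdot\,;D_h)$ just derived; the factor $\mathrm{e}^{\mathrm{i}\kappa(d-\hat x)\cdot h}$ then emerges from $\mathrm{e}^{\mathrm{i}\kappa d\cdot h}$ (from $\tilde u^s$) times $\mathrm{e}^{-\mathrm{i}\kappa\hat x\cdot h}$ (from $\mathrm{e}^{-\mathrm{i}\kappa\hat x\cdot(y+h)}$ in the kernel).
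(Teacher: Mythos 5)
Your proposal is correct and follows essentially the same route as the paper: translate the scattered field, verify the transferred boundary data, and invoke uniqueness/well-posedness of the direct problem to obtain $u^s(x;D_h)=\mathrm{e}^{\mathrm{i}\kappa d\cdot h}u^s(x-h;D)$. The only difference is in the final extraction step, and it is cosmetic: the paper changes variables in the boundary-integral far-field formula \eqref{Bi_farfield} (your ``alternative''), while your primary route does the asymptotic bookkeeping in \eqref{farf} directly, which is equally valid since the far-field pattern depends smoothly on the observation direction.
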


\begin{proof}
Based on the boundary condition, for $x\in\partial D_h$, we derive that 
\begin{align*}
u^s(x;D_h)=-\mathrm{e}^{\mathrm{i}\kappa d\cdot x}=-\mathrm{e}^{\mathrm{i}\kappa d\cdot (x-h)}\mathrm{e}^{\mathrm{i}\kappa d\cdot h}=u^s(x-h;D)\mathrm{e}^{\mathrm{i}\kappa d\cdot h}.
\end{align*}
By the uniqueness of the solution to the direct scattering problem \cite{DL2023},  it follows that
\begin{align*}
u^s(x;D_h)=\mathrm{e}^{\mathrm{i}\kappa d\cdot h}u^s(x-h;D), \quad \forall\, x\in\mathbb{R}^2\setminus D_h.
\end{align*}
Consequently, we have
\begin{align*}
\Delta u^s(x;D_h)=\mathrm{e}^{\mathrm{i}\kappa d\cdot h}\Delta u^s(x-h;D), \quad \forall\, x\in\mathbb{R}^2\setminus D_h,
\end{align*}
and
\begin{align*}
\frac{\partial u^s}{\partial n}(x;D_h)&=\mathrm{e}^{\mathrm{i}\kappa d\cdot h}\frac{\partial u^s}{\partial n}(x-h;D),\quad x\in\partial D_h,\\
\frac{\partial \Delta u^s}{\partial n}(x;D_h)&=\mathrm{e}^{\mathrm{i}\kappa d\cdot h}\frac{\partial\Delta u^s}{\partial n}(x-h;D),\quad x\in\partial D_h.
\end{align*}
Combining the above equations yields 
\begin{align*}
\int_{\partial D_h}\frac{\partial u^s}{\partial n}(y;D_h)\mathrm{e}^{-\mathrm{i}\kappa\hat{x}\cdot y}\mathrm{d}s(y)&=\int_{\partial D_h}\mathrm{e}^{\mathrm{i}\kappa d\cdot h}\frac{\partial u^s}{\partial n}(y-h;D)\mathrm{e}^{-\mathrm{i}\kappa\hat{x}\cdot (y-h)}\mathrm{e}^{-\mathrm{i}\kappa\hat{x}\cdot h}\mathrm{d}s(y)\\
&=\mathrm{e}^{\mathrm{i}\kappa(d-\hat{x})\cdot h}\int_{\partial D}\frac{\partial u^s}{\partial n}(y;D)\mathrm{e}^{-\mathrm{i}\kappa\hat{x}\cdot y}\mathrm{d}s(y).
\end{align*}
Similarly, we obtain 
\begin{align*}
\int_{\partial D_h}\frac{\partial\Delta u^s}{\partial n}(y;D_h)\mathrm{e}^{-\mathrm{i}\kappa\hat{x}\cdot y}\mathrm{d}s(y)&=\mathrm{e}^{\mathrm{i}\kappa(d-\hat{x})\cdot h}\int_{\partial D}\frac{\partial\Delta u^s}{\partial n}(y;D)\mathrm{e}^{-\mathrm{i}\kappa\hat{x}\cdot y}\mathrm{d}s(y), \\
\int_{\partial D_h}\frac{\partial\mathrm{e}^{-\mathrm{i}\kappa\hat{x}\cdot y}}{\partial n(y)}u^s(y;D_h)\mathrm{d}s(y)&=\mathrm{e}^{\mathrm{i}\kappa(d-\hat{x})\cdot h}\int_{\partial D}\frac{\partial\mathrm{e}^{-\mathrm{i}\kappa\hat{x}\cdot y}}{\partial n(y)}u^s(y;D)\mathrm{d}s(y),\\
\int_{\partial D_h}\frac{\partial\mathrm{e}^{-\mathrm{i}\kappa\hat{x}\cdot y}}{\partial n(y)}\Delta u^s(y;D_h)\mathrm{d}s(y)&=\mathrm{e}^{\mathrm{i}\kappa(d-\hat{x})\cdot h}\int_{\partial D}\frac{\partial\mathrm{e}^{-\mathrm{i}\kappa\hat{x}\cdot y}}{\partial n(y)}\Delta u^s(y;D)\mathrm{d}s(y),
\end{align*}
which imply that \eqref{transinvar} holds by noting \eqref{Bi_farfield}.
\end{proof}

Theorem \ref{ti} demonstrates that the phaseless far-field pattern admits a translation invariance property when utilizing a plane wave as an incident field. Hence, it is not feasible to recover 
the cavity's location solely based on the magnitude of the far-field pattern. To address this challenge, we introduce point sources into the scattering system and explore the uniqueness of the inverse problem by utilizing phaseless near-field data.

\begin{figure}
\centering
\includegraphics[width=0.4\textwidth]{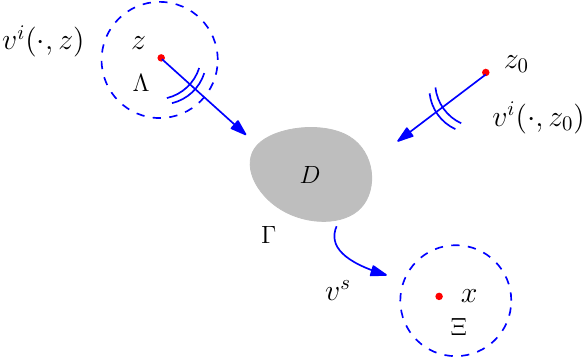}
\caption{A schematic for the configuration of the inverse cavity scattering problem utilizing phaseless near-field data.}
\label{pg}
\end{figure}

Consider the function $v^i(x, z_0)=G(x, z_0)$, where $z_0\in \mathbb R^2\setminus\overline D$ denotes the source location, representing a point source. Let $v^s(x, z_0)$ and $v(x, z_0)$ represent the corresponding scattered field and the total field, respectively. Similarly, define $v^i(x, z)=G(x, z)$, $v^s(x, z)$, and $v(x, z)$ as the incident field, scattered field, and total field, respectively, in relation to the point source located at $z\in\mathbb R^3\setminus \{\overline D\cup z_0\}$. Utilizing the principle of superposition, it can be deduced that $v(x, z_0)+v(x, z)$ constitutes the total field corresponding to the point incident field $G(x, z_0)+G(x, z)$. Figure \ref{pg} schematically illustrates the configuration of the problem.

Define the far-field pattern generated by a point source $v^i(x, z)$ as follows: 
\begin{align}\label{vinftytot}
v_{\rm tot}^\infty(\hat{x},z)=v_{\rm H}^\infty(\hat{x},z)+G^\infty(\hat{x},z),
\end{align}
where $v_{\rm H}^\infty$ is the far-field pattern corresponding to the scattered field $v^s(x, z)$, and $G^\infty(\hat{x},z)=-\frac{1}{2\kappa^2}\frac{\mathrm{e}^{\mathrm{i}\pi/4}}{\sqrt{8\kappa\pi}}\mathrm{e}^{-\mathrm{i}\kappa\hat{x}\cdot z}$ represents the far-field pattern produced by the point source of $G(x,z)$.

Building upon the motivation from \cite{ZGSL2020}, the subsequent result addresses the uniqueness aspect of the inverse cavity scattering problem, employing phaseless near-field data. 

\begin{theorem}\label{unithmphaseless}
Let $D_1$ and $D_2$ be two cavities with boundaries in $\mathcal{C}^3$. Suppose that $\Lambda$ and $\Xi$ are open domains such that $\Lambda\cap \Xi=\emptyset$, $\Lambda\cap D_j=\emptyset$, and $\Xi\cap D_j=\emptyset$, for $j=1,2$. Given a fixed wavenumber $\kappa$ and a fixed point $z_0\in\mathbb{R}^2\setminus\{\overline{D_1\cup D_2\cup\Lambda\cup\Xi}\}$, if the phaseless total fields $v_j$, where $j=1, 2$, satisfy the following conditions:
\begin{align}
|v_{1}(x,z_0)|&=|v_{2}(x,z_0)|, &&\forall\,{x}\in\Xi, \label{condi1}\\
|v_{1}(x,z)|&=|v_{2}(x,z)|, &&\forall\,(x,z)\in\Xi\times\Lambda, \label{condi2}\\
|v_{1}(x,z_0)+v_{1}(x,z)|&=|v_{2}(x,z_0)+v_{2}(x,z)|, &&\forall\,(x,z)\in\Xi\times\Lambda. \label{condi3}
\end{align}
Then $D_1=D_2$.
\end{theorem}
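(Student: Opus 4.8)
The plan is to adapt the phaseless near-field scheme of \cite{ZGSL2020}: first recover the interference term from the three moduli, then upgrade the resulting real identity to the complex identity $v_1=v_2$, and finally invoke the boundary-singularity contradiction from the proof of Theorem~\ref{uniqueness1}. Fix $x\in\Xi$ and expand $|v_j(x,z_0)+v_j(x,z)|^2=|v_j(x,z_0)|^2+|v_j(x,z)|^2+2\Re\!\big(v_j(x,z_0)\overline{v_j(x,z)}\big)$; then \eqref{condi1}--\eqref{condi3} give $\Re\!\big(v_1(x,z_0)\overline{v_1(x,z)}\big)=\Re\!\big(v_2(x,z_0)\overline{v_2(x,z)}\big)$ for all $z\in\Lambda$. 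Since also $|v_1(x,z)|=|v_2(x,z)|$ and $|v_1(x,z_0)|=|v_2(x,z_0)|$, an elementary manipulation shows that at each $z\in\Lambda$ with $v_j(x,z)\neq0$ one has either $v_1(x,z)v_2(x,z_0)=v_2(x,z)v_1(x,z_0)$ or $v_1(x,z)\overline{v_2(x,z_0)}=\overline{v_2(x,z)}\,v_1(x,z_0)$.

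Next I would globalize this dichotomy. By the reciprocity relation \eqref{symmv} and the symmetry of $G$, each $v_j(x,\cdot)=v_j(\cdot,x)$ is a radiating solution of the biharmonic wave equation in its argument away from the source point $x$, and it is not identically zero on $\Lambda$ (else, by unique continuation for $\Delta^2-\kappa^4=(\Delta-\kappa^2)(\Delta+\kappa^2)$ — which holds since unique continuation holds for both factors — its Helmholtz component would vanish identically, contradicting the logarithmic singularity of $G_{\rm H}(\cdot,x)$ at $x$). The two sets carrying the respective equalities of the previous step are then relatively closed in a nonempty open subset of $\Lambda$ which they cover, so one of them has nonempty interior; unique continuation then spreads that equality to all of $\mathbb R^2\setminus(\overline{D_1\cup D_2}\cup\{x\})$. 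Hence, for each $x\in\Xi$, either (i) $v_1(\cdot,x)=\tau(x)v_2(\cdot,x)$ or (ii) $v_1(\cdot,x)=\tau(x)\overline{v_2(\cdot,x)}$ there, with $|\tau(x)|=1$ and $\tau(x)$ determined by $v_1(x,z_0),v_2(x,z_0)$.

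Alternative (ii) I would rule out as follows: applying the Helmholtz-component operator $-\frac1{2\kappa^2}(\Delta-\kappa^2)$ in the argument (it commutes with complex conjugation, the coefficients being real) yields $v_{{\rm H},1}(\cdot,x)=\tau(x)\overline{v_{{\rm H},2}(\cdot,x)}$; the left side is outgoing and the right side incoming, so comparing the $|\cdot|^{-1/2}\mathrm e^{\pm\mathrm i\kappa|\cdot|}$ asymptotics forces both far-field patterns to vanish, hence $v_{{\rm H},1}(\cdot,x)\equiv0$ by Rellich's lemma \cite[Lemma~2.12]{DR-shu2} — impossible, because $v_{{\rm H},1}(\cdot,x)=-\frac1{2\kappa^2}G_{\rm H}(\cdot,x)+v^s_{{\rm H},1}(\cdot,x)$ is singular at $x$ while $v^s_{{\rm H},1}(\cdot,x)$ is smooth there ($\Xi\subset\mathbb R^2\setminus\overline{D_1}$). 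Thus (i) holds; writing $v_j(\cdot,x)=G(\cdot,x)+v^s_j(\cdot,x)$ and taking Helmholtz components, $(1-\tau(x))G_{\rm H}(\cdot,x)$ must equal a function bounded near $x$, which forces $\tau(x)=1$. Therefore $v_1(x,z)=v_2(x,z)$, equivalently $v^s_1(\cdot,x)=v^s_2(\cdot,x)$ in $\mathbb R^2\setminus(\overline{D_1\cup D_2}\cup\{x\})$, for every $x\in\Xi$. From here the contradiction argument in the proof of Theorem~\ref{uniqueness1} carries over: if $D_1\neq D_2$, pick $x^*\in\partial D_1\setminus\overline{D_2}$ and $z_m=x^*+\frac1m n(x^*)$; continuity up to $\partial D_1$ and the clamped condition $v_1(\cdot,x)=0$ on $\partial D_1$ give $\lim_{m\to\infty}v^s_1(z_m,x)=-G(x^*,x)$, continuity of $v^s_2(\cdot,x)$ at $x^*\notin\overline{D_2}$ gives $\lim_{m\to\infty}v^s_2(z_m,x)=v^s_2(x^*,x)$, so $v_2(x^*,x)=0$ for all $x\in\Xi$; by reciprocity $v_2(\cdot,x^*)$ vanishes on the open set $\Xi$, hence identically on $\mathbb R^2\setminus(\overline{D_2}\cup\{x^*\})$ by unique continuation, contradicting the singularity of $G_{\rm H}(\cdot,x^*)$ at $x^*\notin\overline{D_2}$. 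Hence $D_1=D_2$.

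The main obstacle is the passage from the three scalar modulus identities to the exact relation $v_1=v_2$: phase retrieval is inherently sign-ambiguous, and discarding the spurious conjugate branch (ii) is the delicate point. That step genuinely uses the structure of the biharmonic operator — that only the Helmholtz component carries outgoing radiation while the modified-Helmholtz component decays exponentially, so that an outgoing field cannot coincide with a conjugated (incoming) one — and it is here, rather than in the algebra, that the biharmonic case departs from the acoustic one. A secondary technical point is justifying the ``elementary manipulation'' and the covering/unique-continuation argument carefully in the presence of zeros of $v_j(x,\cdot)$ in $\Lambda$.
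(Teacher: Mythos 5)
Your proposal is correct in substance and follows the same skeleton as the paper's proof (polarization identity to recover $\Re\{v_1(x,z_0)\overline{v_1(x,z)}\}=\Re\{v_2(x,z_0)\overline{v_2(x,z)}\}$, the resulting phase dichotomy, exclusion of the spurious branches, then a singular-source contradiction), but several of your key steps are genuinely different and, in places, simpler. Where the paper rules out degeneracies and the phase factors by Sobolev-regularity arguments --- introducing auxiliary $\mathcal{C}^3$ subdomains $\Lambda_0,\Xi_0$, noting the scattered fields give $H^{5/2}\times H^{3/2}$ Cauchy data, and contradicting $G(\cdot,x^*)\in H^2_{\rm loc}$ but $\notin H^3$ near the source (and, in the conjugate case, the same for $G_0$) --- you instead apply the Helmholtz-component operator $-\tfrac{1}{2\kappa^2}(\Delta-\kappa^2)$ and exploit the logarithmic blow-up of $G_{\rm H}(\cdot,x)$ against the boundedness of the scattered Helmholtz components; this is a valid and cleaner route, since the pointwise singularity that the full biharmonic Green's function lacks is restored at the level of its Helmholtz part. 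Likewise, you dispose of the conjugate branch for arbitrary unimodular $\tau(x)$ in one stroke by matching outgoing $\mathrm{e}^{\mathrm{i}\kappa r}$ against incoming $\mathrm{e}^{-\mathrm{i}\kappa r}$ asymptotics and invoking Rellich, whereas the paper first proves $\mathrm{e}^{\mathrm{i}\eta(x)}\equiv1$ via the $G_0$-regularity argument and only then uses the non-existence of $\lim_{\rho\to\infty}\mathrm{e}^{2\mathrm{i}\kappa\rho}$ together with $v^\infty_{\rm tot,1}\not\equiv0$ (Lemma \ref{Rellich1}); the underlying oscillation idea is the same, your ordering is more direct. Finally, your endgame bypasses the paper's chain ``equality of far-field patterns $\Rightarrow$ mixed reciprocity \eqref{mixrel} $\Rightarrow$ Theorem \ref{uniqueness1}'' and instead runs the boundary contradiction directly with the biharmonic point sources ($v_2(x^*,\cdot)\equiv0$ on an open set, unique continuation, singularity of $G_{\rm H}(\cdot,x^*)$), which is also legitimate; what the paper's route buys is reuse of Theorem \ref{uniqueness1} and of the mixed reciprocity machinery, while yours is self-contained at this stage.

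Two points you flag but should not leave implicit in a final write-up: (a) the dichotomy and the definition of $\tau(x)$ require $v_j(x,z_0)\neq0$, so, exactly as the paper does with $\widetilde{\Xi}_0\times\widetilde{\Lambda}_0$, you must restrict to an open subset of $\Xi$ where $p(x,z_0)\neq0$ (available since $v_2(\cdot,z_0)$ cannot vanish on an open set, by the same component-singularity argument), and correspondingly weaken ``for every $x\in\Xi$'' to ``for every $x$ in that open subset,'' which suffices for your final contradiction; (b) the unique-continuation steps for combinations involving $\overline{v_2(\cdot,x)}$ should be justified, as you indicate, through the decomposition into Helmholtz and modified Helmholtz components and their real-analyticity, the biharmonic operator having real coefficients. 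With these details filled in, the argument is complete.
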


\begin{proof}
It follows from \eqref{condi1}--\eqref{condi3} that we have
\begin{align}\label{Reequal}
	\Re\{v_1(x,z_0)\overline{v_1(x,z)}\}=\Re\{v_2(x,z_0)\overline{v_2(x,z)}\}, \quad \forall\,(x,z)\in\Xi\times\Lambda.
\end{align}
Given that $v_j(x, z_0)$ and $v_j(x, z)$, where $j=1,2$, are complex-valued functions, they can be expressed in the form 
\begin{align*}
	v_j(x,z_0)=p(x,z_0)\mathrm{e}^{\mathrm{i}\alpha_j(x,z_0)},\quad
	v_j(x,z)=q(x,z)\mathrm{e}^{\mathrm{i}\beta_j(x,z)},
\end{align*}
where $p(x, z_0)$ and $q(x, z)$ denote the magnitudes of $v_j(x, z_0)$ and $v_j(x, z)$, and $\alpha_j(x, z_0)$ and $\beta_j(x, z)$ are the arguments of $v_j(x, z_0)$ and $v_j(x, z)$, respectively. 

We claim that $q(x,z)\not\equiv0$ for all $(x,z)\in\Xi\times\Lambda$. Otherwise, $v_j(x,z)\equiv0$ for all $(x,z)\in\Xi\times\Lambda$. For $z^*\in\Lambda$, there exists a subdomain $\Lambda_0\subset\Lambda$, where $\Lambda_0\in\mathcal{C}^3$, such that $z^*\in\partial\Lambda_0$, and the total field $v_j$ satisfies
\begin{align*}
v_j(x,z^*)\equiv0,\quad 
	\frac{\partial v_j(x,z^*)}{\partial n(x)}=0, \quad \forall\,x\in\partial\Xi\cup\Gamma_j.
\end{align*}
By using the uniqueness of the solution to the direct scattering problem, we have
\begin{align*}
	v_j(x,z^*)=0, \quad \forall\,x\in\mathbb{R}^2\setminus\overline{D_1\cup D_2\cup\{z^*\}},
\end{align*}
which implies 
\begin{align*}
	G(x,z^*)\big|_{\partial\Lambda_0}=-v^s_j(x,z^*)\big|_{\partial\Lambda_0},\quad \frac{\partial G(x,z^*)}{\partial n(x)}\bigg|_{\partial\Lambda_0}=-\frac{\partial v^s_j(x,z^*)}{\partial n(x)}\bigg|_{\partial\Lambda_0}.
\end{align*}

Noting that the scattered field $v^s_j$ is infinitely smooth in the vicinity of $z^*$ and $\partial\Lambda_0\in\mathcal{C}^3$, we obtain 
\begin{align*}
	\Big(v^s_j(\cdot,z^*)\big|_{\partial\Lambda_0},\,  \partial_n v^s_j(\cdot,z^*)\big|_{\partial\Lambda_0}\Big)\in H^{5/2}(\partial\Lambda_0)\times H^{3/2}(\partial\Lambda_0).
\end{align*}
This regularity also applies to  $(G(\cdot,z^*)|_{\partial\Lambda_0},\, \partial_n G(\cdot,z^*)|_{\partial\Lambda_0})$. Therefore, the function $G(\cdot,z^*)$ belongs to $H^3$ in the vicinity of $z*$, which contradicts the fact that $G(\cdot,z^*)\in H^2_{loc}(\mathbb{R}^2)$ (cf. \cite[Lemma 2.2]{BR2020}). By the continuity and reciprocity of $q(x,z)$, there exist subdomains $\widetilde{\Xi}\subset\Xi$ and $\widetilde{\Lambda}_0\subset\Lambda$ such that $q(x,z)\not=0$ for all $(x,z)\in\widetilde{\Xi}\times\widetilde{\Lambda}_0$. Similarly, there exists a subdomain $\widetilde{\Xi}_0\subset\widetilde{\Xi}\subset\Xi$ such that $p(x,z_0)\not=0$ and $q(x,z)\not=0$ for all $(x,z)\in\widetilde{\Xi}_0\times\widetilde{\Lambda}_0$.

According to \eqref{Reequal}, we deduce
\[
\cos[\alpha_1(x,z_0)-\beta_1(x,z)]=\cos[\alpha_2(x,z_0)-\beta_2(x,z)], \quad \forall\,(x,z)\in\widetilde{\Xi}_0\times\widetilde{\Lambda}_0, 
\]
implying that we may either have
\begin{equation}
	\label{case1}
	\zeta(x):=\alpha_1(x,z_0)-\alpha_2(x,z_0)-2m\pi=\beta_1(x,z)-\beta_2(x,z), \quad \forall\,(x,z)\in\widetilde{\Xi}_0\times\widetilde{\Lambda}_0, 
\end{equation}
or 
\begin{equation}
	\label{case2}
	\eta(x):=\alpha_1(x,z_0)+\alpha_2(x,z_0)-2m\pi=\beta_1(x,z)+\beta_2(x,z), \quad \forall\,(x,z)\in\widetilde{\Xi}_0\times\widetilde{\Lambda}_0.
\end{equation}

For case \eqref{case1}, considering
\[
v_1(x,z)=q(x,z)\mathrm{e}^{\mathrm{i}\beta_1(x,z)}=q(x,z)\mathrm{e}^{\mathrm{i}\beta_2(x,z)+\mathrm{i}\zeta(x)}=\mathrm{e}^{\mathrm{i}\zeta(x)}v_2(x,z)
\]
and employing the reciprocity relation \eqref{symmv}, we obtain 
\[
v_1(z,x)=\mathrm{e}^{\mathrm{i}\zeta(x)}v_2(z,x), \quad \forall\,(z,x)\in\widetilde{\Lambda}_0\times\widetilde{\Xi}_0.
\]
Using the continuity leads to 
\[
v_1(z,x)=\mathrm{e}^{\mathrm{i}\zeta(x)}v_2(z,x),\quad\partial_{n(z)}v_1(z,x)=\mathrm{e}^{\mathrm{i}\zeta(x)}\partial_{n(z)}v_2(z,x), \quad \forall\,(z,x)\in\partial\widetilde{\Lambda}_0\times\widetilde{\Xi}_0.
\]
Additionally, utilizing the well-posedness of the direct scattering problem, we affirm that
\[
v_1(z,x)=\mathrm{e}^{\mathrm{i}\zeta(x)}v_2(z,x), \quad \forall\,z\in\mathbb{R}^2\setminus\overline{D_1\cup D_2\cup\{x\}}, \quad \forall\,x\in\widetilde{\Xi}_0.
\] 
In other words,
\[
v_1^s(z,x)+G(z,x)=\mathrm{e}^{\mathrm{i}\zeta(x)}\big(v_2^s(z,x)+G(z,x)\big),  \quad \forall\,z\in\mathbb{R}^2\setminus\overline{D_1\cup D_2\cup\{x\}}, \quad \forall\,x\in\widetilde{\Xi}_0.
\]

We claim that $\mathrm{e}^{\mathrm{i}\zeta(x)}\equiv1$ for all $x\in\widetilde{\Xi}_0$. Otherwise, there exits $ x^*\in\widetilde{\Xi}_0$ such that $\mathrm{e}^{\mathrm{i}\zeta(x^*)}-1\not=0$. Then, there exists a domain $\Xi_0\subset\widetilde{\Xi}$, where $\Xi_0\in\mathcal{C}^3$, such that  $x^*\in\partial\Xi_0$ and
\begin{align*}
G(z,x^*)=\frac{v_1^s(z,x^*)-\mathrm{e}^{\mathrm{i}\zeta(x^*)}v_2^s(z,x^*)}{\mathrm{e}^{\mathrm{i}\zeta(x^*)}-1}, \quad \forall\,z\in\mathbb{R}^2\setminus\overline{D_1\cup D_2\cup\{x^*\}}.
\end{align*}
Noting that the scattered field $v^s_j$ is infinitely smooth in the vicinity of $x^*$ and $\partial\Xi_0\in\mathcal{C}^3$, the above identity implies that $\big(G(\cdot,x^*)\big|_{\partial\Xi_0},\,  \partial_n G(\cdot,x^*)\big|_{\partial\Xi_0}\big)\in H^{5/2}(\partial\Xi_0)\times H^{3/2}(\partial\Xi_0)$. Consequently, the function $G(\cdot,x^*)$ belongs to $H^3$ in the vicinity of $x^*$, which leads to a contradiction. Therefore, we deduce
\[
v^s_1(z,x)=v^s_2(z,x), \quad \forall\,z\in\mathbb{R}^2\setminus\overline{D_1\cup D_2\cup\{x\}}, \quad \forall\,x\in\widetilde{\Xi}_0, 
\] 
which shows that the corresponding far-field patterns coincide, i.e., $v^\infty_{{\rm H},1}(\hat{z},x)=v^\infty_{{\rm H},2}(\hat{z},x)$ for all $(\hat{z},x)\in\Omega\times\widetilde{\Xi}_0$. From the mixed reciprocity relation \eqref{mixrel}, we obtain $u^s_1(x,-\hat{z})=u^s_2(x,-\hat{z})$ for all $(x,\hat{z})\in\widetilde{\Xi}_0\times\Omega$.
Subsequently, considering continuity and the well-posedness of the direct scattering problem, we derive
\[
u^\infty_1(\hat{x},d)=u^\infty_2(\hat{x},d), \quad \forall\,(\hat{x},d)\in\Omega.
\]
Hence, through the utilization of Theorem \ref{uniqueness1}, we conclude that $D_1=D_2$.

The proof is concluded by excluding case \eqref{case2}. Suppose that \eqref{case2} is valid, then following the similar arguments, it can be established that
\[
v_1(z,x)=\mathrm{e}^{\mathrm{i}\eta(x)}\overline{v_2(z,x)}, \quad \forall\,z\in\mathbb{R}^2\setminus\overline{D_1\cup D_2\cup\{x\}}, \quad \forall\,x\in\widetilde{\Xi}_0.
\]
We claim that $\mathrm{e}^{\mathrm{i}\eta(x)}\equiv1$ for all $x\in\widetilde{\Xi}_0$. Otherwise, there exits $x^*\in\widetilde{\Xi}_0$ such that $\mathrm{e}^{\mathrm{i}\eta(x^*)}-1\not=0$. Then, we have
\begin{align}\label{Gcomplex}
G(z,x^*)-\mathrm{e}^{\mathrm{i}\eta(x^*)}\overline{G(z,x^*)}=\mathrm{e}^{\mathrm{i}\eta(x^*)}\overline{v^s_2(z,x^*)}-v^s_1(z,x^*).
\end{align}
The Green function $G(z,x^*)$ can be decomposed as
\[
G(z,x^*)=G_0(z,x^*)+T(z,x^*), \quad z\not=x^*,
\]
where $G_0(z,x^*)=-|z-x^*|^2\log|z-x^*|/(8\pi)\in H^2_{loc}(\mathbb{R}^2)$ is the fundamental solution of the bi-Laplacian operator $\Delta^2$, and $T(z,x^*)$ is an infinitely smooth function. Consequently, the identity \eqref{Gcomplex} can be reformulated as
\begin{align}\label{G0complex}
	(1-\mathrm{e}^{\mathrm{i}\eta(x^*)})G_0(z,x^*)=\mathrm{e}^{\mathrm{i}\eta(x^*)}\overline{v^s_2(z,x^*)+T(z,x^*)}-\big(v^s_1(z,x^*)+T(z,x^*)\big)
\end{align}
for $z\in\mathbb{R}^2\setminus\overline{D_1\cup D_2\cup\{x^*\}}$. Again, through a similar discussion, we have from \eqref{G0complex} that the function $G_0(\cdot,x^*)$ belongs to $H^3$ in the vicinity of $x^*$, which is a contradiction. Hence, we obtain 
\[
v_1(z,x)=\overline{v_2(z,x)}, \quad \forall\,z\in\mathbb{R}^2\setminus\overline{D_1\cup D_2\cup\{x\}}, \quad \forall\,x\in\widetilde{\Xi}_0.
\]

From \eqref{vinftytot} and Lemma \ref{Rellich1}, it is evident that $v^\infty_{{\rm tot},1}(\hat{z},x)\not\equiv0$. By using continuity, there exist $\Omega_0\subset\Omega$ and $\widetilde{\Xi}^0_0\subset\widetilde{\Xi}_0$ such that $v^\infty_{{\rm tot},1}(\hat{z},x)\not=0$ for all $(\hat{z},x)\in\Omega_0\times\widetilde{\Xi}^0_0$. For $\tilde{z}\in\Omega_0$, denote $z=\rho\tilde{z}$. From the definition of the far-field pattern \eqref{farf}, it follows that
\begin{align*}
	\lim_{\rho\to\infty}\rho^{1/2}\mathrm{e}^{-\mathrm{i}\kappa\rho}v_1(\rho\tilde{z},x)=v^\infty_{{\rm tot},1}(\tilde{z},x), \quad \lim_{\rho\to\infty}\rho^{1/2}\mathrm{e}^{-\mathrm{i}\kappa\rho}\overline{v_2(\rho\tilde{z},x)}=\overline{v^\infty_{{\rm tot},2}(\tilde{z},x)}.
\end{align*}
Additionally, observing that $v^\infty_{{\rm tot},1}(\hat{z},x)\neq 0$ and $v_1(z,x)=\overline{v_2(z,x)}$, we deduce
\[
\lim_{\rho\to\infty}\mathrm{e}^{2\mathrm{i}\kappa\rho}=\frac{\overline{v^\infty_{{\rm tot},2}(\tilde{z},x)}}{v^\infty_{{\rm tot},1}(\tilde{z},x)},
\]
which constitutes a contradiction since the limit of the left-hand side does not exist. Therefore, case \eqref{case2} is not valid and the proof is completed. 
\end{proof}

\section{Conclusion}\label{S5}

In this paper, we have studied the inverse cavity scattering problem concerning the two-dimensional biharmonic wave equation. Initially, employing the decomposition of the biharmonic equation, we transform the original scattering problem into an equivalent coupled boundary value problem. Subsequently, we demonstrate the Green's representation of the solution for the original boundary value problem. This leads us to derive two types of mixed reciprocity relations, connecting the scattered field produced by plane waves to the far-field pattern generated by various point sources. Moreover, we investigate the symmetry relation of the scattered field resulting from different point sources and establish a one-to-one correspondence between the Helmholtz component of the biharmonic wave and the far-field pattern. We proceed to establish the uniqueness result using the far-field pattern with a fixed wavenumber. Additionally, building upon this uniqueness result, we establish uniqueness of the inverse cavity scattering problem from the phaseless near-field data. 

It is worth mentioning that the results presented in this work can be readily extended to the three-dimensional problem with straightforward modifications. However, we opted for the two-dimensional setting to demonstrate the results due to the motivation stemming from thin plate elasticity, where the physical validation principally occurs within a two-dimensional framework.

This work primarily concentrates on the clamped boundary condition. Our aim is to expand upon these findings to include the diverse range of boundary conditions frequently encountered in elastic thin plates. Moreover, our current research involves exploring uniqueness using phaseless far-field patterns and employing numerical methods to address the inverse cavity scattering problem. The progress made in these investigations will be presented in a future publication.

\end{document}